\newcommand{\word}{\mathbf}
\newcommand{\wa}{\word{a}}
\newcommand{\wb}{\word{b}}
\newcommand{\we}{\word{e}}
\newcommand{\wh}{\word{h}}
\newcommand{\ws}{\word{s}}
\newcommand{\wt}{\word{t}}
\newcommand{\wu}{\word{u}}
\newcommand{\wv}{\word{v}}
\newcommand{\ww}{\word{w}}
\newcommand{\variety}{\mathbf}
\newcommand{\vA}{\variety{A}}
\newcommand{\vC}{\variety{C}}
\newcommand{\vD}{\variety{D}}
\newcommand{\vE}{\variety{E}}
\newcommand{\vF}{\variety{F}}
\newcommand{\vH}{\variety{H}}
\newcommand{\vJ}{\variety{J}}
\newcommand{\vK}{\variety{K}}
\newcommand{\vL}{\variety{L}}
\newcommand{\vM}{\variety{M}}
\newcommand{\vN}{\variety{N}}
\newcommand{\vO}{\variety{O}}
\newcommand{\vP}{\variety{P}}
\newcommand{\vQ}{\variety{Q}}
\newcommand{\vR}{\variety{R}}
\newcommand{\vSL}{\variety{SL}}
\newcommand{\vT}{\variety{T}}
\newcommand{\vV}{\variety{V}}
\newcommand{\vX}{\variety{X}}
\newcommand{\setofwords}{\mathscr}
\newcommand{\sX}{\setofwords{X}}
\newcommand{\Acen}{\variety{A}_\mathsf{cen}}
\newcommand{\Acom}{\variety{A}_\mathsf{com}}
\DeclareMathOperator{\con}{con}
\DeclareMathOperator{\simple}{sim}
\DeclareMathOperator{\mul}{mul}
\DeclareMathOperator{\occ}{occ}
\DeclareMathOperator{\var}{var}
\newtheorem{theorem}{Theorem}[section]
\newtheorem{proposition}[theorem]{Proposition}
\newtheorem{lemma}[theorem]{Lemma}
\newtheorem{corollary}[theorem]{Corollary}
\newtheorem{observation}[theorem]{Observation}
\theoremstyle{definition}
\numberwithin{equation}{section}
\renewcommand*\subjclass[2][2010]{\def\@subjclass{#2}\@ifundefined{subjclassname@#1}{\ClassWarning{\@classname}{Unknown edition (#1) of Mathematics Subject Classification; using '2010'.}}{\@xp\let\@xp\subjclassname\csname subjclassname@#1\endcsname}}
\renewcommand{\subjclassname}{\textup{2010} Mathematics Subject Classification}
\begin{document}

\title[Cross varieties of aperiodic monoids with commuting idempotents]{Cross varieties of aperiodic monoids\\ with commuting idempotents}
\thanks{The work is supported by the Ministry of Science and Higher Education of the Russian Federation (project FEUZ-2020-0016).}

\author{S.V. Gusev}

\address{Ural Federal University, Institute of Natural Sciences and Mathematics, Lenina 51, 620000 Ekaterinburg, Russia}

\email{sergey.gusb@gmail.com}

\begin{abstract}
A variety of algebras is called Cross if it is finitely based, finitely generated, and has finitely many subvarieties. In present article, we classify all Cross varieties of aperiodic monoids with commuting idempotents. 
\end{abstract}

\keywords{Monoid, aperiodic monoid, commuting idempotents, variety, Cross variety}

\subjclass{20M07}

\maketitle

\section{Introduction}
\label{sec: introduction}

A variety of algebras is called \textit{finitely based} if it has a finite basis of its identities. A variety is called \textit{finitely generated} if it is generated by a finite algebra. A variety with finitely many subvarieties is said to be \textit{small}. A finitely generated, finitely based, small variety of algebras is called a \textit{Cross variety}. Finite members from
several classical classes of algebras such as groups and associative rings generate Cross varieties (see~\cite{Oates-Powell-64} and~\cite{Kruse-73,Lvov-73}, respectively). But this result does not hold in general. For example, there are a lot of finite semigroups or monoids that generate non-Cross varieties.

For any class of algebras, one method of describing Cross varieties in this class is to find its minimal non-Cross varieties, or \textit{almost Cross varieties}. Any non-Cross variety contains some almost Cross subvariety by Zorn's lemma. It follows that a variety is Cross if and only if it does not contain any almost Cross variety. So, if one manages to classify all almost Cross varieties within some class of varieties, then this classification implies a description of all Cross varieties in this class.

The present article is concerned with the class of \textit{aperiodic monoids}, i.e., monoids that have trivial subgroups only. For a long time, it was known only two explicit examples of almost Cross varieties of monoids: the variety of all commutative monoids~\cite{Head-68} and the variety of all idempotent monoids~\cite{Wismath-86}. The second of these varieties is the first example of an almost Cross variety of aperiodic monoids. More recently, Jackson~\cite{Jackson-05} found two new examples of almost Cross monoid varieties $\vM$ and $\vN$. It turned out that $\vM$ and $\vN$ are finitely generated subvarieties of the class $\Acen$ of aperiodic monoids with central idempotents. In~\cite{Lee-11}, Lee showed that $\vM$ and $\vN$ are the unique finitely generated almost Cross subvarieties of $\Acen$. Further, Lee~\cite{Lee-13} found one more (non-finitely generated) almost Cross subvariety $\vL$ of $\Acen$ and established that only $\vL$, $\vM$ and $\vN$ are almost Cross subvarieties of $\Acen$. 

It is of fundamental interest to generalize this result by Lee from $\Acen$ to some larger class of monoids. The class $\Acom$ of aperiodic monoids with commuting idempotents is a natural candidate. The first step in describing almost Cross subvarieties of $\Acom$ was taken by Lee~\cite{Lee-14}. Namely, he proved that a subvariety of $\Acom$ that satisfies the identity $xyx^2\approx x^2yx$ is Cross if and only if it excludes the almost Cross varieties $\vL$, $\vM$ and $\vN$. However, there exist almost Cross subvarieties of $\Acom$ that
are different from $\vL$, $\vM$ and $\vN$. The two of such varieties $\vK$ and $\overleftarrow{\vK}$ were exhibited in~\cite{Gusev-Vernikov-18}. Finally, just recently, two more almost Cross subvarieties $\vJ$ and $\overleftarrow{\vJ}$ of $\Acom$ were found in~\cite{Gusev-20}. In present article, we provide two new examples of almost Cross subvarieties $\vP$ and $\overleftarrow{\vP}$ of $\Acom$ and then completely classify all Cross varieties within the class $\Acom$.

The article consists of five sections. Some background results are first given in Section~\ref{sec: prelim}. In Section~\ref{sec: var P}, we provide two new examples of almost Cross subvarieties of $\Acom$. Let $\vO$ denote the monoid variety given by the identities 
\begin{align}
\label{xtyzxy=xtyzyx}
xtyzxy&\approx xtyzyx,\\
\label{xtxyzy=xtyxzy}
xtxyzy&\approx xtyxzy.
\end{align}
The subvarieties of $\vO$ have been actively studied. In particular, any subvariety of $\vO$ is finitely based and $\vO$ is a maximal variety with such a property~\cite{Lee-12}. Section~\ref{sec: var O} is devoted to the description of aperiodic almost Cross subvarieties of $\vO$. Finally, in Section~\ref{sec: main result}, the main result of the article is formulated and is proved. Namely, it is established that a subvariety of $\Acom$ is Cross if and only if it excludes the nine almost Cross subvarieties of $\Acom$.

\section{Preliminaries} 
\label{sec: prelim}

\subsection{Locally finite varieties}

Recall that a variety of universal algebras is called \textit{locally finite} if all of its finitely generated members are finite.

\begin{lemma}[\!{\cite[Lemma~2.1]{Jackson-Lee-18}}] 
\label{L: LF+small=FG}
A locally finite variety $\vV$ of algebras is finitely generated if and only if there exists no strictly increasing infinite chain $\vV_1 \subset \vV_2 \subset \vV_3 \subset \cdots$ of varieties such that $\vV=\bigvee_{i\ge1}\vV_i$. Consequently, any locally finite, small variety is finitely generated.\qed
\end{lemma}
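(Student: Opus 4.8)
The plan is to prove both implications by relating the join of a chain of subvarieties to the behaviour of the finitely generated relatively free algebras $F_{\vV}(n)$, exploiting that local finiteness makes every $F_{\vV}(n)$ finite. Throughout I use the standard facts that $\mathrm{Id}\bigl(\bigvee_i\vW_i\bigr)=\bigcap_i\mathrm{Id}(\vW_i)$, that an identity holds in a variety iff it holds in its free algebra on the variables occurring in it, and that an $m$-generated member of a variety is a homomorphic image of the $m$-generated free algebra.

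For the forward implication, suppose $\vV=\var(A)$ for a finite, say $m$-generated, algebra $A$, and let $\vV_1\subseteq\vV_2\subseteq\cdots$ be any ascending chain of subvarieties with $\vV=\bigvee_{i\ge1}\vV_i$. Fixing $n$, I would view the $F_{\vV_i}(n)$ as quotients $F_{\vV}(n)/\theta_i$, where each $\theta_i$ is the congruence on $F_{\vV}(n)$ cut out by the $n$-variable identities of $\vV_i$. Since $\vV_i\subseteq\vV_{i+1}$, the chain $\theta_1\supseteq\theta_2\supseteq\cdots$ is descending; as $F_{\vV}(n)$ is finite by local finiteness, it has only finitely many congruences, so this chain stabilises at $\bigcap_i\theta_i$, which is trivial because $\bigcap_i\mathrm{Id}(\vV_i)=\mathrm{Id}(\bigvee_i\vV_i)=\mathrm{Id}(\vV)$. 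Hence $F_{\vV_i}(n)=F_{\vV}(n)$ for all large $i$. Taking $n=m$, the algebra $A$ is a homomorphic image of $F_{\vV}(m)=F_{\vV_i}(m)\in\vV_i$ for some $i$, so $A\in\vV_i$ and $\vV=\var(A)\subseteq\vV_i$; thus $\vV=\vV_i=\vV_{i+1}=\cdots$, and no such chain can be strictly increasing.

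For the converse I would argue contrapositively: assuming $\vV$ is not finitely generated, set $\vV_n:=\var(F_{\vV}(n))$. Each $F_{\vV}(n)$ is finite, so each $\vV_n$ is finitely generated; and since the identity map on the first $n$ generators together with $x_{n+1}\mapsto x_n$ exhibits $F_{\vV}(n)$ as a homomorphic image of $F_{\vV}(n+1)$, we get $\vV_1\subseteq\vV_2\subseteq\cdots$. Because an identity involves only finitely many variables, it holds in $\vV$ exactly when it holds in the corresponding $F_{\vV}(k)$, hence exactly when it holds in every $F_{\vV}(n)$; this yields $\bigvee_{n\ge1}\vV_n=\vV$. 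If this chain stabilised, then $\vV=\var(F_{\vV}(N))$ would be finitely generated, contrary to assumption, so it increases strictly infinitely often, and passing to a cofinal strictly increasing subchain produces the required chain with join $\vV$. The final ``consequently'' clause is then immediate: a locally finite, small variety cannot fail to be finitely generated, since the converse direction would otherwise furnish a strictly increasing infinite chain of subvarieties and hence infinitely many distinct subvarieties.

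The main obstacle I anticipate is the forward implication, and specifically making precise that the join $\bigvee_i\vV_i$ is controlled levelwise by the finite free algebras $F_{\vV}(n)$, so that the eventual stabilisation of a descending chain of congruences on a single finite algebra translates into the statement that a finite generator of $\vV$ already lies in one member of the chain. Once local finiteness is invoked to guarantee finiteness of each $F_{\vV}(n)$, the remaining manipulations with congruences, quotients, and equational theories are routine.
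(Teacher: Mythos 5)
Your proof is correct and complete. Be aware, though, that the paper itself offers no proof of this lemma: it is quoted, with attribution, from Lemma~2.1 of Jackson and Lee \cite{Jackson-Lee-18}, and stated as a cited result, so there is no internal argument to compare yours with; what you have done is reconstruct the proof of the cited result, and your reconstruction is sound. In the forward direction the key points all check out: the relations $\theta_i$ on $F_{\vV}(n)$ are well-defined congruences because $\mathrm{Id}(\vV)\subseteq\mathrm{Id}(\vV_i)$, the descending chain $\theta_1\supseteq\theta_2\supseteq\cdots$ stabilizes because a finite algebra has only finitely many congruences, its stable value is the diagonal because $\bigcap_i\mathrm{Id}(\vV_i)=\mathrm{Id}\bigl(\bigvee_i\vV_i\bigr)=\mathrm{Id}(\vV)$, and therefore $F_{\vV_i}(m)=F_{\vV}(m)$ maps onto the finite generator $A$ for all large $i$, which collapses the chain. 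In the converse direction, $\vV_n=\var(F_{\vV}(n))$ is an ascending chain of finitely generated subvarieties (each $F_{\vV}(n)$ is finite by local finiteness) whose join is $\vV$ because every identity involves only finitely many variables; non-finite-generation of $\vV$ prevents stabilization, and a cofinal subchain is strictly increasing with the same join, as you say. The only steps worth making fully explicit are that a finite algebra $A$ is $|A|$-generated (so the parameter $m$ exists) and that the passage from ``the chain does not stabilize'' to a strictly increasing cofinal subchain is the routine extraction you describe; with those remarks your argument is a complete, self-contained substitute for the external citation.
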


The following assertion directly follows from~\cite[Proposition~3.1]{Sapir-87}.

\begin{lemma}
\label{L: xyx=xyxx is locally finite}
Each monoid variety that satisfies the identity
\begin{equation}
\label{xyx=xyxx}
xyx\approx xyx^2
\end{equation}
is locally finite.\qed
\end{lemma}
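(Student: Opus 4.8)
The plan is to reduce the statement to the known characterization of locally finite periodic varieties due to Sapir, namely to Proposition~3.1 of~\cite{Sapir-87}. Recall the \emph{Zimin words} $Z_1,Z_2,\dots$ defined recursively by $Z_1=x_1$ and $Z_{n+1}=Z_nx_{n+1}Z_n$; in particular $Z_2=x_1x_2x_1$, which up to renaming of variables is the word $xyx$. Sapir's result asserts that a periodic monoid variety is locally finite provided that some Zimin word $Z_n$ fails to be an isoterm for it, that is, the variety satisfies a nontrivial identity $Z_n\approx\ww$ with $\ww\ne Z_n$ as words and $\var(\ww)=\var(Z_n)$. So it suffices to check two hypotheses for an arbitrary monoid variety $\vV$ satisfying~\eqref{xyx=xyxx}: that $\vV$ is periodic, and that some $Z_n$ is not an isoterm for $\vV$.

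Periodicity is immediate. Substituting the identity element for $y$ in~\eqref{xyx=xyxx}, i.e.\ applying the endomorphism of the free monoid that sends $y$ to the empty word and fixes $x$, yields $x^2\approx x^3$, whence $x^n\approx x^{n+1}$ for every $n\ge2$. Thus $\vV$ is periodic; in fact it is aperiodic of index at most $2$. For the isoterm condition I would take $n=2$: the defining identity~\eqref{xyx=xyxx} is exactly $Z_2\approx Z_2x$, where the right-hand side $Z_2x=xyx^2$ differs from $Z_2=xyx$ as a word while $\var(Z_2x)=\var(Z_2)=\{x,y\}$. Hence $Z_2$ is not an isoterm for $\vV$. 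Both hypotheses of Sapir's criterion therefore hold with $n=2$, and we conclude that $\vV$ is locally finite.

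The only delicate point, and the reason the lemma is asserted to follow \emph{directly} rather than being reproved, is aligning the exact formulation of Proposition~3.1 of~\cite{Sapir-87} with the statement above: one must confirm that its periodicity requirement is met by $x^2\approx x^3$ (the bounded-index, trivial-period case) and that the precise shape of its non-isoterm hypothesis is witnessed by the identity $xyx\approx xyx^2$. Since this witnessing identity retains the full variable set and merely appends a copy of $x$, no degenerate situation (loss of a variable, or collapse to a trivial identity) can occur, so the verification is routine and the reduction goes through without further combinatorial work.
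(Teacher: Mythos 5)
Your proposal is correct and takes essentially the same approach as the paper: the paper gives no argument at all beyond asserting that the lemma ``directly follows from'' Proposition~3.1 of Sapir's 1987 paper, and your reduction --- substituting $1$ for $y$ to get $x^2\approx x^3$ (so the variety is periodic with trivial, hence locally finite, groups) and observing that $Z_2=xyx$ is not an isoterm because $xyx\approx xyx^2$ is itself a nontrivial identity --- is exactly the routine verification that citation leaves implicit. The one point to watch is that Sapir's criterion also assumes the groups of the variety are locally finite, a hypothesis your paraphrase of his proposition omits but which your aperiodicity observation does supply.
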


\subsection{Subvarieties of $\Acom$}

For each $n\ge1$, let $\vA_n$ denote the variety defined by the identities
\begin{align}
\label{x^n=x^{n+1}}
x^n&\approx x^{n+1},\\
\label{x^ny^n=y^nx^n}
x^ny^n&\approx y^nx^n.
\end{align}
It is easily shown that the inclusions $\vA_1 \subset \vA_2 \subset \cdots \subset \Acom$ hold and are proper. The following claim is evident.

\begin{observation}
\label{O: var in Acom}
The class $\Acom$ is not a variety, but each of its subvarieties is contained in $\vA_n$ for all sufficiently large $n$.\qed
\end{observation}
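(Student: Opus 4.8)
The plan is to treat the two assertions separately, deriving the containment in $\vA_n$ first and then using it—or a direct homomorphic-image argument—to dispose of the claim that $\Acom$ is not a variety.

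For the containment claim, let $\vV$ be an arbitrary variety with $\vV\subseteq\Acom$. The key object is the relatively free monoid $F$ of $\vV$ on a single generator $x$; it lies in $\vV$ and hence in $\Acom$, so it is aperiodic. A one-generated monoid is a cyclic monoid, so $F$ is either the free monoid $\{1,x,x^2,\dots\}$ on one letter or a finite cyclic monoid of some index $k$ and period $p$. First I would rule out the infinite case: the free monoid on one letter has every finite cyclic group as a homomorphic image, and a nontrivial cyclic group is not aperiodic; since $\vV$ is closed under homomorphic images this would force a non-aperiodic monoid into $\vV$, contradicting $\vV\subseteq\Acom$. Hence $F$ is finite. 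In a finite cyclic monoid of period $p$ the top $p$ elements form a subgroup of order $p$; aperiodicity forces $p=1$, so $F$ satisfies $x^k\approx x^{k+1}$, and therefore $\vV$ satisfies \eqref{x^n=x^{n+1}} with $n=k$.

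The second step upgrades the commuting-idempotents hypothesis to the identity \eqref{x^ny^n=y^nx^n}. Once $\vV\models x^k\approx x^{k+1}$, for every member $M$ of $\vV$ and every $a,b\in M$ the elements $a^k$ and $b^k$ are idempotent (since $(a^k)^2=a^{2k}=a^k$); as $M$ has commuting idempotents, $a^kb^k=b^ka^k$. Because this holds in every $M\in\vV$ and for every substitution, $\vV$ satisfies $x^ky^k\approx y^kx^k$, i.e.\ \eqref{x^ny^n=y^nx^n} with $n=k$. Combining the two steps gives $\vV\subseteq\vA_k$, and since $x^k\approx x^{k+1}$ forces $x^n=x^k$ for all $n\ge k$, the same two identities remain valid for every $n\ge k$; thus $\vV\subseteq\vA_n$ for all sufficiently large $n$.

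Finally, for the assertion that $\Acom$ is not a variety, I would argue by contradiction. The free monoid on one generator, $\mathbb{N}$, is aperiodic and commutative, hence belongs to $\Acom$; but the cyclic group $\mathbb{Z}/2\mathbb{Z}$ is a homomorphic image of it and is not aperiodic, so it lies outside $\Acom$. A variety is closed under homomorphic images, so $\Acom$ cannot be a variety. (Equivalently, were $\Acom$ a variety, the containment claim applied to $\vV=\Acom$ would give $\Acom\subseteq\vA_k$ for some $k$, whence $\vA_{k+1}\subseteq\Acom\subseteq\vA_k$, contradicting the proper inclusion $\vA_k\subset\vA_{k+1}$ recorded before the statement.) The only point needing care is the passage from the pointwise property ``idempotents commute'' to a genuine identity; everything else is routine structure theory of cyclic monoids.
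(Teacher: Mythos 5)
Your proof is correct. Note that the paper offers no proof here at all: the observation is prefaced by ``The following claim is evident'' and closed with a tombstone, so your write-up is not an alternative to the paper's argument but the natural filling-in of it. Your route is exactly the intended one: the one-generated relatively free monoid of a subvariety $\vV\subseteq\Acom$ is a monogenic aperiodic monoid, which (after excluding the infinite case via the homomorphic image $\mathbb{Z}/2\mathbb{Z}$) must be finite of period $1$, giving~\eqref{x^n=x^{n+1}}; then $x^n$ and $y^n$ are idempotent in every member of $\vV$, so the commuting-idempotents hypothesis upgrades to the genuine identity~\eqref{x^ny^n=y^nx^n}, whence $\vV\subseteq\vA_n$ for all large $n$; and the surjection of $(\mathbb{N},+)\in\Acom$ onto $\mathbb{Z}/2\mathbb{Z}\notin\Acom$ shows $\Acom$ is not closed under homomorphic images, hence not a variety. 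You also correctly handle the two points that genuinely require care under the paper's definition of aperiodic (trivial subgroups only): the free monogenic monoid $\mathbb{N}$ really does lie in $\Acom$ under this definition, and the passage from the pointwise property ``idempotents commute'' to an identity is legitimate precisely because the idempotents involved are the values of the term $x^n$.
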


A variety of monoids is called \textit{completely regular} if it consists of \textit{completely regular monoids}~(i.e., unions of groups). Let $\vT$ and $\vSL$ denote the trivial variety of monoids and the variety of all semilattice monoids, respectively. We note that $\vA_1=\vSL$.

\begin{observation}
\label{O: cr in Acom}
If $\vV$ is a completely regular subvariety of $\Acom$, then $\vV\in\{\vT,\vSL\}$.
\end{observation}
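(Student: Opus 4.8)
The plan is to show that aperiodicity collapses a completely regular subvariety to a variety of bands, that commuting idempotents then forces commutativity, and finally that the resulting variety sits inside $\vSL$, which has no proper nontrivial subvarieties.

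First I would use the semantic meaning of the hypotheses. A completely regular monoid is a union of groups, so in any member of $\vV$ every element lies in some subgroup. Since all members of $\vV$ are aperiodic, all of these subgroups are trivial; hence every element coincides with the identity of the (trivial) subgroup containing it and is therefore idempotent. As this holds in every member, $\vV$ satisfies the identity $x\approx x^2$, i.e., $\vV$ is a variety of bands. Next, because $\vV$ is a subvariety of $\Acom$, the idempotents in each of its members commute; but in a band \emph{every} element is idempotent, so this says precisely that $\vV$ satisfies $xy\approx yx$. Combining the two identities, every member of $\vV$ is a commutative idempotent monoid, i.e., a semilattice monoid, so $\vV\subseteq\vSL$.

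It then remains to invoke the fact that the only subvarieties of $\vSL$ are $\vT$ and $\vSL$ itself. I would justify this by noting that any nontrivial semilattice monoid contains a two-element subsemilattice, and that the two-element semilattice monoid generates $\vSL$; hence a nontrivial subvariety of $\vSL$ must already be all of $\vSL$. Consequently $\vV\in\{\vT,\vSL\}$, as required.

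The argument is essentially routine, so there is no serious obstacle; the only point needing a moment's care is the final step, the verification that $\vSL$ has no proper nontrivial subvariety. This is a standard fact about the lattice of semilattice varieties and can be dispatched in a single sentence once one records that $\vSL$ is generated by the two-element semilattice monoid.
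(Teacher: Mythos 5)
Your proof is correct and takes essentially the same route as the paper's: both arguments reduce the statement to the inclusion $\vV\subseteq\vSL$ and then conclude from the fact that $\vT$ and $\vSL$ are the only subvarieties of $\vSL$. The only difference is presentational — the paper reaches $\vV\subseteq\vSL$ via Observation~2.3 (the identities $x^n\approx x^{n+1}$ and $x^ny^n\approx y^nx^n$, with complete regularity forcing $n=1$), while you argue semantically (trivial subgroups force $x\approx x^2$, commuting idempotents then force $xy\approx yx$) and you also spell out the covering $\vT\prec\vSL$ that the paper cites as well known.
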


\begin{proof}
Observation~\ref{O: var in Acom} implies that $\vV$ satisfies the identities~\eqref{x^n=x^{n+1}} and~\eqref{x^ny^n=y^nx^n} for some $n\ge1$. If $n$ is the least number with such a property, then $n=1$ because $\vV$ is completely regular. Hence $\vV\subseteq\vSL$. It is well known that $\vSL$ covers $\vT$, whence $\vV\in\{\vT,\vSL\}$.
\end{proof}

\begin{observation}
\label{O: cr and comm are Cross}
Each commutative or completely regular subvariety of $\Acom$ is Cross.
\end{observation}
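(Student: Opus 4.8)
The plan is to deduce both cases from the result of Head~\cite{Head-68} that the variety of all commutative monoids is almost Cross, i.e. a minimal non-Cross variety. By the very definition of minimality, every \emph{proper} subvariety of an almost Cross variety is Cross, and this is the single fact I intend to exploit.

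First I would reduce the completely regular case to the commutative one. By Observation~\ref{O: cr in Acom}, a completely regular subvariety of $\Acom$ is either $\vT$ or $\vSL$, and both of these are commutative (the trivial variety and the variety of semilattice monoids). Hence it suffices to prove that an arbitrary commutative subvariety $\vV$ of $\Acom$ is Cross.

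So let $\vV$ be commutative. Then $\vV$ is contained in the variety of all commutative monoids, and the key point is that this containment is \emph{proper}: since $\vV\subseteq\Acom$, the variety $\vV$ is aperiodic, whereas the variety of all commutative monoids is not aperiodic because it contains nontrivial abelian groups, such as the two-element group. Thus $\vV$ is a proper subvariety of the almost Cross variety of all commutative monoids, and minimality of the latter forces $\vV$ to be Cross.

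This route has essentially no obstacle: the one point requiring care is the strictness of the inclusion, which is exactly where aperiodicity enters, after which Head's theorem delivers all three defining properties of a Cross variety simultaneously. For contrast, a self-contained argument avoiding Head's theorem would have to verify each property separately. Local finiteness is cheap---by Observation~\ref{O: var in Acom} such a $\vV$ satisfies $x^n\approx x^{n+1}$ for some $n$, so its relatively free monoid on $k$ generators has at most $(n+1)^k$ elements---whence Lemma~\ref{L: LF+small=FG} would yield finite generation as soon as smallness were available. Establishing smallness (finitely many subvarieties), together with a finite basis, is the genuinely nontrivial work that the appeal to Head's theorem lets us bypass.
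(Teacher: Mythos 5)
Your proof is correct and follows essentially the same route as the paper: the completely regular case is reduced via Observation~\ref{O: cr in Acom} to $\vT$ and $\vSL$, and the commutative case rests on Head's results~\cite{Head-68}. The only cosmetic difference is that you invoke Head's theorem in the form ``the variety of all commutative monoids is almost Cross'' and then pass to proper subvarieties using aperiodicity of $\vV$ (via the two-element group) to get properness, whereas the paper cites directly that commutative aperiodic varieties are Cross; both are immediate consequences of the same result.
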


\begin{proof}
A completely regular subvariety of $\Acom$ is either $\vT$ or $\vSL$ by Observation~\ref{O: cr in Acom} and so Cross. A commutative aperiodic variety is Cross by the results of~\cite{Head-68}. 
\end{proof}

\subsection{Words, identities, Rees quotients of free monoids}

Let~$\sX^\ast$ denote the free monoid over a countably infinite alphabet~$\sX$.
Elements of~$\sX$ are called \textit{letters} and elements of~$\sX^\ast$ are called \textit{words}. We treat the identity element of~$\sX$ as \textit{the empty word}. The \textit{content} of a word $\ww$, i.e., the set of all letters occurring in $\ww$, is denoted by $\con(\ww)$. An identity is an expression $\wu\approx\wv$ where $\wu,\wv\in\sX^\ast$.

\begin{observation}
\label{O: equivalent identities}
Let $\wa,\wa',\wb,\wb'\in\sX^\ast$, $x,t_1,t_2,\dots,t_k\in\sX$ and $\vV$ be a monoid variety that satisfies the identity~\eqref{x^n=x^{n+1}}. Suppose that $t_1,t_2,\dots,t_k\notin\con(\wa\wa'\wb\wb')$. Then the identities 
$$
\sigma:\enskip\wa t_1x^n\wb\approx\wa' t_1x^n\wb'\ \text{ and }\ 
\tau:\enskip\wa \prod_{i=1}^k(t_ix^n)\wb\approx \wa' \prod_{i=1}^k(t_ix^n)\wb'
$$ 
are equivalent in $\vV$.
\end{observation}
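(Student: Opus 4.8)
The plan is to prove the two implications separately, since $\sigma$ and $\tau$ being equivalent in $\vV$ means precisely that each is a consequence of the other relative to the identities of $\vV$. Both directions reduce to a single substitution, so the proof is short and the law~\eqref{x^n=x^{n+1}} is needed in only one of them.

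For the implication $\sigma\Rightarrow\tau$, I would observe that $\tau$ is in fact a substitution instance of $\sigma$, so that no identity of $\vV$ is required. Consider the substitution $\phi$ that fixes every letter except $t_1$ and sends $t_1$ to the word $\bigl(\prod_{i=1}^{k-1}(t_ix^n)\bigr)t_k$. Then $\phi(t_1)\,x^n=\prod_{i=1}^{k}(t_ix^n)$, while $\phi$ leaves $\wa,\wa',\wb,\wb'$ unchanged because $t_1\notin\con(\wa\wa'\wb\wb')$. Applying $\phi$ to both sides of $\sigma$ therefore yields exactly $\tau$, and hence $\tau$ follows from $\sigma$ in $\vV$ (indeed in every variety).

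For the converse implication $\tau\Rightarrow\sigma$, I would substitute the empty word for each of $t_2,\dots,t_k$ in $\tau$; this is a legitimate substitution in the monoid setting. Since $t_2,\dots,t_k\notin\con(\wa\wa'\wb\wb')$, the outer words $\wa,\wa',\wb,\wb'$ are untouched, and the middle block $\prod_{i=1}^{k}(t_ix^n)$ collapses to $t_1x^{nk}$. It then remains to replace $x^{nk}$ by $x^n$ on both sides, which is where~\eqref{x^n=x^{n+1}} enters: that identity forces $x^m\approx x^n$ for every $m\ge n$, and in particular $x^{nk}\approx x^n$. The resulting identity is precisely $\sigma$, completing the second direction.

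The argument is essentially bookkeeping, so I do not anticipate a genuinely hard step. The only point requiring care is to choose $\phi(t_1)$ so that the block $t_1x^n$ telescopes exactly into the full product $\prod_{i=1}^k(t_ix^n)$, together with checking that the hypothesis $t_1,\dots,t_k\notin\con(\wa\wa'\wb\wb')$ is exactly what guarantees the outer words remain fixed under both substitutions.
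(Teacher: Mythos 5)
Your proof is correct and takes essentially the same approach as the paper: the first direction by exhibiting $\tau$ as a substitution instance of $\sigma$, the second by substituting the empty word for $t_2,\dots,t_k$ and then collapsing $x^{kn}$ to $x^n$ using~\eqref{x^n=x^{n+1}}. Your substitution $\phi(t_1)=\bigl(\prod_{i=1}^{k-1}(t_ix^n)\bigr)t_k$ is in fact the correct choice; the paper's stated substitution of $x^n\prod_{i=2}^k(t_ix^n)$ for $t_1$ appears to be a misprint, since it neither reproduces $t_1$ nor telescopes exactly into $\prod_{i=1}^k(t_ix^n)$.
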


\begin{proof}
We substitute $x^n\prod_{i=2}^k(t_ix^n)$ for $t_1$ in the identity $\sigma$ and obtain the identity $\tau$. So, $\tau$ follows from $\sigma$.

Conversely, substitute~1 for the letters $t_2,t_3,\dots, t_k$ in the identity $\tau$. We get the identity $\wa t_1x^{kn}\wb\approx\wa' t_1x^{kn}\wb'$. Evidently, this identity together with~\eqref{x^n=x^{n+1}} implies $\sigma$. Therefore, $\sigma$ and $\tau$ are equivalent in $\vV$.
\end{proof}

For any set of words $W\subseteq\sX^\ast$, let $S(W)$ denote the Rees quotient over the ideal of $\sX^\ast$ consisting of all words that are not subwords of words in $W$.
A word~$\ww$ is an \textit{isoterm} for a variety~$\vV$ if it violates any non-trivial identity of the form $\ww\approx\ww'$. The following statement shows it is relatively easy to check if $S(W)$ is contained in some given variety.

\begin{lemma}[\!{\cite[Lemma~3.3]{Jackson-05}}] 
\label{L: isoterm}
For any variety~$\vV$ and any set of words~$W$, the inclusion $S(W)\in\vV$ holds if and only if all words from $W$ are isoterms for~$\vV$.\qed
\end{lemma}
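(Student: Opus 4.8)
The plan is to prove the two implications separately, the forward one being routine and the converse requiring a short syntactic detour. Throughout I would identify the nonzero elements of $S(W)$ with the subwords of words in $W$, so that for a substitution $\varphi\colon\sX^\ast\to S(W)$ the value $\varphi(\ww)$ is nonzero precisely when every letter of $\ww$ has nonzero image and the concatenation of these images is again a subword of some word in $W$; moreover two nonzero values coincide if and only if the representing words are equal. This dictionary between elements and subwords is what lets identities in $S(W)$ be read off combinatorially.

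For the implication $S(W)\in\vV\Rightarrow$ \emph{all words of $W$ are isoterms}, I would take $\ww\in W$ together with a consequence $\vV\models\ww\approx\ww'$ and evaluate it in $S(W)$ under the substitution sending each letter to its own class. Then $\varphi(\ww)=\ww$ is nonzero, because $\ww$ is a subword of itself, whence $\varphi(\ww')$ must equal this same nonzero element; by the identification above this forces $\ww'=\ww$, so the identity is trivial and $\ww$ is an isoterm.

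For the converse I would fix an identity $\wu\approx\wv$ of $\vV$ and a substitution $\varphi\colon\sX^\ast\to S(W)$, and show $\varphi(\wu)=\varphi(\wv)$. Since $\wv\approx\wu$ is also an identity of $\vV$, it suffices by symmetry to treat the case $\varphi(\wu)\neq0$ and deduce $\varphi(\wu)=\varphi(\wv)$ (if both values are $0$ there is nothing to prove, and if exactly one is nonzero the nonzero side forces the other to agree with it). In this case every letter of $\wu$ has nonzero image, so writing $\psi(y)$ for the subword representing $\varphi(y)$ produces a \emph{word} substitution $\psi$ with $\varphi(\wu)=\psi(\wu)=:g$, a subword of some $\ww\in W$. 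The key preliminary observation is that a subword of an isoterm is again an isoterm: if $\ww=\wa g\wb$ and $\vV\models g\approx g'$, then $\vV\models\ww\approx\wa g'\wb$, and the isoterm property of $\ww$ together with cancellation in $\sX^\ast$ gives $g'=g$. Applying $\psi$ to the identity $\wu\approx\wv$ yields $\vV\models g\approx\psi(\wv)$, and since $g$ is an isoterm we conclude $\psi(\wv)=g$, hence $\varphi(\wv)=g=\varphi(\wu)$.

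The one genuinely delicate point, which I expect to be the main obstacle, is justifying that every letter of $\wv$ has nonzero image, so that $\varphi(\wv)$ really equals $\psi(\wv)$ rather than collapsing to $0$. If some letter $y_0\in\con(\wv)\setminus\con(\wu)$ had $\varphi(y_0)=0$, then substituting the empty word for every letter other than $y_0$ in $\wu\approx\wv$ would yield $\vV\models x^m\approx1$ for some $m\ge1$; but such an identity forces every nonempty word to violate a non-trivial identity, contradicting the assumption that the words of $W$, at least one of which is nonempty, are isoterms. This is exactly the place where the isoterm hypothesis does its work: a content-violating identity of $\vV$ is incompatible with the presence of a nonempty isoterm, so no such identity can obstruct the evaluation. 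The residual degenerate situation in which $W$ contains no nonempty word is then disposed of by inspecting the small monoid $S(W)$ directly.
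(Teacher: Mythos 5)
Your proof is correct, but there is nothing in the paper to compare it against: the paper does not prove this lemma at all, it imports it verbatim from Jackson's article (Lemma~3.3 of the cited paper) with a reference in place of an argument. Your reconstruction is the standard proof of that result. The forward direction by evaluating a putative identity $\ww\approx\ww'$ under the substitution fixing each letter is exactly right, and the converse rests on the two correct key points: a factor of an isoterm is an isoterm (compatibility of equational logic with multiplication plus cancellation in $\sX^\ast$), and the isoterm hypothesis forbids $\vV$ from satisfying any identity whose two sides have different content, which is what prevents $\varphi(\wv)$ from collapsing to $0$ once $\varphi(\wu)\ne0$. One small streamlining: your case split on whether $W$ contains a nonempty word is avoidable, since $x^m\approx1$ is itself a non-trivial identity of the form $1\approx\ww'$, so it already witnesses that the \emph{empty} word is not an isoterm; thus any nonempty $W$ yields the contradiction directly, and the only residual case is $W=\emptyset$, where $S(W)$ is the trivial monoid and the inclusion is automatic. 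As written, your ``inspect $S(W)$ directly'' remark for $W\subseteq\{1\}$ is slightly telegraphic but harmless: $S(\{1\})$ is the two-element monoid $\{1,0\}$, whose identities are precisely those $\wu\approx\wv$ with $\con(\wu)=\con(\wv)$, and the same substitution trick shows any content-violating identity of $\vV$ would contradict the empty word being an isoterm.
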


\subsection{Decompositions of words}
\label{subsec: decompositions of words}

A letter is called \textit{simple} [\textit{multiple}] \textit{in a word} $\ww$ if it occurs in $\ww$ once [at least twice]. The set of all simple [multiple] letters in a word $\ww$ is denoted by $\simple(\ww)$ [respectively, $\mul(\ww)$]. The number of occurrences of the letter $x$ in $\ww$ is denoted by $\occ_x(\ww)$. For a word $\ww$ and letters $x_1,x_2,\dots,x_k\in \con(\ww)$, let $\ww(x_1,x_2,\dots,x_k)$ denote the word obtained from $\ww$ by deleting all letters except $x_1,x_2,\dots,x_k$. 

Let $\ww$ be a word and $\simple(\ww)=\{t_1,t_2,\dots, t_m\}$. We may assume without loss of generality that $\ww(t_1,t_2,\dots, t_m)=t_1t_2\cdots t_m$. Then $\ww = t_0\ww_0 t_1 \ww_1 \cdots t_m \ww_m$ where $\ww_0,\ww_1,\dots,\ww_m$ are possibly empty words and $t_0$ is the empty word. The words $\ww_0$, $\ww_1$, \dots, $\ww_m$ are called \textit{blocks} of $\bf w$, while $t_0,t_1,\dots,t_m$ are said to be \textit{dividers} of $\ww$. The representation of the word $\ww$ as a product of alternating dividers and blocks starting with the divider $t_0$ and ending with the block $\ww_m$ is called a \textit{decomposition} of the word $\ww$. For a given word $\ww$, a letter $x\in\con(\ww)$ and a natural number $i\le\occ_x(\ww)$, we denote by $h_i(\ww,x)$ the right-most divider of $\ww$ that precedes the $i$th occurrence of $x$ in $\ww$.{\sloppy

}
\begin{lemma}[\!{\cite[Proposition~2.13 and Lemma~2.14]{Gusev-Vernikov-18}}] 
\label{L: equivalent decompositions}
Let $\vV$ be a non-completely regular non-commutative monoid variety and $\wu\approx \wv$ be an identity that holds in $\vV$. Suppose that 
\begin{equation}
\label{decomposition of u}
t_0\wu_0 t_1 \wu_1 \cdots t_m \wu_m
\end{equation}
is the decomposition of $\wu$. Then the decomposition of $\wv$ has the form
\begin{equation}
\label{decomposition of v}
t_0\wv_0 t_1 \wv_1 \cdots t_m \wv_m
\end{equation}
for some words $\wv_0,\wv_1,\dots,\wv_m$ and $\con(\wu_0\wu_1\cdots\wu_m)=\con(\wv_0\wv_1\cdots\wv_m)$.\qed
\end{lemma}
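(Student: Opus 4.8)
The plan is to deduce the statement from two facts about $\vV$: that every identity of $\vV$ preserves the content of words, and that every \emph{linear} word (one in which each letter occurs at most once) is an isoterm for $\vV$. Each fact uses exactly one of the two hypotheses. I would first record the auxiliary observation that a monoid variety satisfying $x\approx x^k$ for some $k\ge2$ is completely regular: for $k=2$ it is a variety of bands, while for $k\ge3$ the element $e:=x^{k-1}$ is an idempotent satisfying $ex\approx xe\approx x$ with inverse $x^{k-2}$, so every element lies in a subgroup.

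To see that content is preserved, suppose $\wu\approx\wv$ holds in $\vV$ while some letter $y$ lies in $\con(\wu)\setminus\con(\wv)$. Substituting $1$ for all letters other than $y$ turns this identity into $y^{\,j}\approx1$ with $j=\occ_y(\wu)\ge1$; but $x^{\,j}\approx1$ forces every monoid in $\vV$ to be a group (or trivial), so $\vV$ would be completely regular, contrary to hypothesis. Hence $\con(\wu)=\con(\wv)$ for every identity of $\vV$.

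Non-commutativity enters through the word $xy$. If $xy$ were not an isoterm, then $\vV\models xy\approx\ww$ for some $\ww\ne xy$ with $\con(\ww)=\{x,y\}$ by the previous paragraph; substituting $1$ for $y$, and then for $x$, gives $x\approx x^{\occ_x(\ww)}$ and $y\approx y^{\occ_y(\ww)}$, so the auxiliary observation forces each of $x,y$ to occur once in $\ww$. Then $\ww\in\{xy,yx\}$, and as $\ww\ne xy$ we get $\ww=yx$, so $\vV\models xy\approx yx$, contradicting non-commutativity; thus $xy$ is an isoterm. I would bootstrap this to an arbitrary linear word $t_1t_2\cdots t_m$: in any identity $t_1\cdots t_m\approx\ww$, restricting to a two-letter subalphabet $\{t_i,t_j\}$ (that is, deleting the remaining letters) yields $t_it_j\approx\ww(t_i,t_j)$, and since $t_it_j$ is an isoterm, each $t_i$ occurs once in $\ww$ and every pair keeps its relative order, whence $\ww=t_1\cdots t_m$.

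To finish, write $\wu=t_0\wu_0t_1\wu_1\cdots t_m\wu_m$, so that $\{t_1,\dots,t_m\}=\simple(\wu)$ and the blocks carry precisely the letters of $\mul(\wu)$. Applying to $\wu\approx\wv$ the substitution that sends every letter of $\mul(\wu)$ to $1$ and fixes the rest, the left-hand side becomes $t_1\cdots t_m$ and the right-hand side becomes $\wv$ with the letters of $\mul(\wu)$ deleted, which by content preservation is a word over $\{t_1,\dots,t_m\}$. As $t_1\cdots t_m$ is an isoterm, this image equals $t_1\cdots t_m$, so each $t_i$ is simple in $\wv$ and the $t_i$ appear there in the same order; the symmetric substitution applied to $\wv\approx\wu$ shows conversely that no other letter is simple in $\wv$. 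Hence $\simple(\wv)=\{t_1,\dots,t_m\}$ in this order, giving the decomposition $t_0\wv_0t_1\wv_1\cdots t_m\wv_m$, and $\con(\wu_0\cdots\wu_m)=\con(\wv_0\cdots\wv_m)$ because both sides equal $\con(\wu)\setminus\simple(\wu)=\con(\wv)\setminus\simple(\wv)$. The main obstacle is the middle step, promoting the two hypotheses to the statement that all linear words are isoterms; everything afterwards is bookkeeping with decompositions, and the only delicate point is the collapse $x\approx x^k\Rightarrow$ complete regularity used in the two-letter case.
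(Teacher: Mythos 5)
Your proof is correct. Be aware, however, that the present article contains no proof of this lemma to compare against: it is stated with a reference to Proposition~2.13 and Lemma~2.14 of~\cite{Gusev-Vernikov-18} and is not reproved here. Relative to that, what you give is a self-contained elementary argument: non-complete regularity kills identities of the forms $x^j\approx 1$ and $x\approx x^k$ ($k\ge 2$), which yields preservation of content; together with non-commutativity this makes $xy$, and then every linear word, an isoterm; the decomposition statement then drops out by substituting the empty word for the letters of $\mul(\wu)$ (respectively $\mul(\wv)$) in $\wu\approx\wv$ and invoking the isoterm property once in each direction, giving $\simple(\wu)=\simple(\wv)$ with the common order of simple letters, whence also $\mul(\wu)=\mul(\wv)$. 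This is the same kind of substitution technique that pervades the literature (including the cited source), so the route is not conceptually exotic, but writing it out makes this step independent of~\cite{Gusev-Vernikov-18}, at the cost of roughly a page, whereas the citation buys brevity. The only detail you should still add concerns the degenerate cases of your middle step: the bootstrapping of the isoterm property from $xy$ to a linear word $t_1t_2\cdots t_m$ argues via pairs of letters and therefore tacitly assumes $m\ge 2$; for $m=0$ the claim is exactly content preservation, and for $m=1$ it follows from your auxiliary observation, so nothing breaks, but these cases do occur (e.g.\ when $\wu$ has at most one simple letter) and should be mentioned.
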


\subsection{The variety $\vF$}
\label{subsec: var F}

Let $\vF$ denote the monoid variety given by the identities~\eqref{xyx=xyxx} and
\begin{align}
\label{xxy=xxyx}
x^2y&\approx x^2yx,\\
\label{xxyy=yyxx} 
x^2y^2&\approx y^2x^2,\\
\label{xyzxy=yxzxy} 
xyzxy&\approx yxzxy.
\end{align}

\begin{lemma}[\!\mdseries{\cite[Proposition~6.9(i)]{Gusev-Vernikov-18}}]
\label{L: word problem F}
A non-trivial identity $\wu\approx\wv$ holds in the variety $\vF$ if and only if $\simple(\wu)=\simple(\wv)$, $\mul(\wu)=\mul(\wv)$ and $h_1(\wu,x)= h_1(\wv,x)$, $h_2(\wu,x)= h_2(\wv,x)$ for all $x\in \con(\wu)$.\qed
\end{lemma}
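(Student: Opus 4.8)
The plan is to show that the relation $\sim$ on $\sX^\ast$, defined by letting $\wu\sim\wv$ mean that $\wu=\wv$, or else that $\simple(\wu)=\simple(\wv)$, $\mul(\wu)=\mul(\wv)$ and $h_1(\wu,x)=h_1(\wv,x)$, $h_2(\wu,x)=h_2(\wv,x)$ for all $x\in\con(\wu)$, coincides with the fully invariant congruence $\theta_{\vF}$ determined by the identities of $\vF$. Since $\sim$ is visibly an equivalence relation, the statement splits into two inclusions: \emph{soundness}, that every identity of $\vF$ is a $\sim$-pair (necessity of the conditions), and \emph{completeness}, that $\wu\sim\wv$ implies $\vF\models\wu\approx\wv$ (sufficiency).

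For soundness, I would first record that $\vF$ is neither completely regular nor commutative. Indeed, the Rees quotient $S(\{xy\})$ belongs to $\vF$ (the word $xy$, having no repeated letter, is an isoterm for the four defining identities by Lemma~\ref{L: isoterm}, since each of them carries a repeated letter on both sides), and $S(\{xy\})$ is both non-commutative, as $xy\neq yx=0$ in it, and not completely regular, as $x^2=0$ there. Thus Lemma~\ref{L: equivalent decompositions} applies to any identity $\wu\approx\wv$ of $\vF$ and yields at once that the divider sequences of $\wu$ and $\wv$ coincide, giving $\simple(\wu)=\simple(\wv)$, and that the block contents agree, giving $\con(\wu)=\con(\wv)$ and hence $\mul(\wu)=\mul(\wv)$. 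It remains to see that the quantities $h_1(\cdot,x)$ and $h_2(\cdot,x)$ are preserved. I would obtain this by checking that $\sim$ is a fully invariant congruence containing the four defining identities: for each of \eqref{xyx=xyxx}, \eqref{xxy=xxyx}, \eqref{xxyy=yyxx} and \eqref{xyzxy=yxzxy} the two sides manifestly share $\simple$, $\mul$ and the first two divider-positions of every letter (the differing occurrences are either third-or-later occurrences or rearrangements inside a single block), after which one verifies stability under concatenation and under substitution. Since $\theta_{\vF}$ is the least fully invariant congruence containing these identities, $\theta_{\vF}\subseteq{\sim}$ follows.

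For completeness I would build a normal form. Using \eqref{xyx=xyxx} and \eqref{xxy=xxyx} one deletes every occurrence of a multiple letter beyond its second, reducing each multiple letter to exactly two occurrences lying in the blocks prescribed by $h_1$ and $h_2$; using \eqref{xxyy=yyxx} and \eqref{xyzxy=yxzxy} one then rearranges the letters inside each block into a fixed canonical order. The output $\mathbf n(\ww)$ depends only on the divider sequence and on the functions $h_1,h_2$, i.e. only on the $\sim$-class of $\ww$, and the reduction clearly terminates, since deletions shorten the word and the block rearrangements require only finitely many swaps. Hence $\wu\sim\wv$ forces $\mathbf n(\wu)=\mathbf n(\wv)$, and since $\vF\models\ww\approx\mathbf n(\ww)$ by construction, we conclude $\vF\models\wu\approx\wv$.

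The main obstacle I expect lies in the interaction of the $h_1,h_2$ data with the two operations that generate a fully invariant congruence. In the soundness half, a substitution can identify distinct letters and can turn a simple letter into a multiple one, so proving that $h_1$ and $h_2$ are preserved under an arbitrary endomorphism $\sigma$ requires careful bookkeeping of how the decomposition of $\sigma(\ww)$ refines that of $\ww$ and how the leftmost two occurrences of each letter of $\sigma(\ww)$ are distributed among the new dividers. In the completeness half, the delicate point is to prove that the four identities really suffice to realise every admissible within-block rearrangement and every absorption of surplus occurrences, that is, that no further invariant is hidden in the relative order of first occurrences inside a block; this is exactly where the specific shapes of \eqref{xxyy=yyxx} and \eqref{xyzxy=yxzxy} are needed.
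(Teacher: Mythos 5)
First, a point of reference: the paper itself contains no proof of Lemma~\ref{L: word problem F} --- it is imported with attribution from \cite[Proposition~6.9(i)]{Gusev-Vernikov-18} --- so your attempt can only be judged on its own merits. Your skeleton is the natural one (necessity by invariance of the data $\simple$, $\mul$, $h_1$, $h_2$ under the defining identities; sufficiency by rewriting to a normal form), and the preliminaries are sound: $xy$ is indeed an isoterm for $\vF$, so $S(\{xy\})\in\vF$ by Lemma~\ref{L: isoterm}, hence $\vF$ is neither commutative nor completely regular and Lemma~\ref{L: equivalent decompositions} gives preservation of the divider sequence, $\simple$ and $\mul$. But both halves of your argument stop exactly where the work begins, and one step is false as stated.

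The false step is in the completeness half, where you claim that after \eqref{xyx=xyxx} and \eqref{xxy=xxyx} absorb occurrences beyond the second, the identities \eqref{xxyy=yyxx} and \eqref{xyzxy=yxzxy} alone rearrange each block into canonical order. Take $\wu=xtxysy$ and $\wv=xtyxsy$. Both are already $2$-limited, and they carry identical data: $\simple=\{t,s\}$, $\mul=\{x,y\}$, $h_1(x)=h_1(t)=t_0$, $h_2(x)=h_1(y)=h_1(s)=t$, $h_2(y)=s$. So any correct normal form must identify them. Yet neither \eqref{xxyy=yyxx} nor \eqref{xyzxy=yxzxy} applies nontrivially to either word: a nontrivial instance of \eqref{xxyy=yyxx} replaces a factor $\wa^2\wb^2$ with $\wa,\wb$ nonempty, so the word must contain a nonempty square factor, which $xtxysy$ does not; a nontrivial instance of \eqref{xyzxy=yxzxy} (in either direction) requires two disjoint factors of length at least two with the same content (namely $\wa\wb$ and $\wa\wb$, or $\wb\wa$ and $\wa\wb$), and in $xtxysy$ no two disjoint factors of length at least two share their content. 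Thus both words are fixed points of your second rewriting phase and receive distinct normal forms, so the procedure cannot prove this valid identity of $\vF$. The identity is indeed derivable, but only by interleaving all four identities, e.g.
$$
\begin{aligned}
xtxysy &\stackrel{\eqref{xyx=xyxx}}\approx xtx^2ysy \stackrel{\eqref{xxy=xxyx}}\approx xtx^2ysxy \stackrel{\eqref{xyzxy=yxzxy}}\approx xtxyxsxy\\
&\stackrel{\eqref{xyzxy=yxzxy}}\approx xtyx^2sxy \stackrel{\eqref{xxy=xxyx}}\approx xtyx^2sy \stackrel{\eqref{xyx=xyxx}}\approx xtyxsy,
\end{aligned}
$$
where the two applications of \eqref{xyzxy=yxzxy} use the substitutions $z\mapsto s$ and $z\mapsto xs$: the insertion identities are needed precisely to manufacture the repeated patterns that \eqref{xyzxy=yxzxy} requires, so the rearrangement step must be redesigned around this interplay. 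The second gap is in the soundness half: preservation of $h_1$ and $h_2$ under an arbitrary substitution instance of the four identities in an arbitrary context (substitutions can erase letters, merge letters and destroy dividers) is the entire content of that direction, and you defer it rather than prove it. As it stands, the proposal is a plan whose two essential verifications are missing, one of them in a form that would fail.
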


If $\wu,\wv\in\sX^\ast$ and $\Sigma$ is an identity or an identity system, then we will write $\wu\stackrel{\Sigma}\approx\wv$ in the case when the identity $\wu\approx\wv$ follows from $\Sigma$.

\begin{lemma}
\label{L: does not contain F}
Let $\vV$ be a non-completely regular monoid variety that satisfies the identity~\eqref{x^n=x^{n+1}} for some $n\ge2$. Then $\vF\nsubseteq\vV$ if and only if $\vV$ satisfies the identity
\begin{equation}
\label{xyx^n=x^nyx^n}
xyx^n\approx x^nyx^n.
\end{equation}
\end{lemma}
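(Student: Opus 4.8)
The plan is to prove both implications with the word problem for $\vF$ (Lemma~\ref{L: word problem F}) as the central tool. For the \textit{if} direction I would argue by contradiction: if $\vV$ satisfies \eqref{xyx^n=x^nyx^n} and $\vF\subseteq\vV$, then $\vF$ inherits \eqref{xyx^n=x^nyx^n}. But a direct check with Lemma~\ref{L: word problem F} shows this fails for every $n\ge2$. Writing $\wu=xyx^n$ and $\wv=x^nyx^n$, one sees $\simple(\wu)=\simple(\wv)=\{y\}$, $\mul(\wu)=\mul(\wv)=\{x\}$ and $h_1(\wu,x)=h_1(\wv,x)$; however the second occurrence of $x$ lies after the divider $y$ in $\wu$ but, since $n\ge2$, inside the first block $x^n$ (hence before $y$) in $\wv$, so that $h_2(\wu,x)\ne h_2(\wv,x)$. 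Thus $\vF\not\models$\eqref{xyx^n=x^nyx^n}, and the contradiction gives $\vF\nsubseteq\vV$.

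For the \textit{only if} direction I would prove the contrapositive: assuming $\vV\not\models$\eqref{xyx^n=x^nyx^n}, I would show $\vF\subseteq\vV$ by verifying that every identity $\wu\approx\wv$ of $\vV$ also holds in $\vF$. First note $\vV$ must be non-commutative, since a commutative variety satisfying \eqref{x^n=x^{n+1}} satisfies $xyx^n\approx x^{n+1}y\approx x^ny\approx x^nyx^n$, contrary to the assumption. Hence $\vV$ is non-completely regular and non-commutative, so Lemma~\ref{L: equivalent decompositions} applies: the decompositions of $\wu$ and $\wv$ share the same dividers $t_0,t_1,\dots,t_m$ and $\con(\wu)=\con(\wv)$. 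This at once yields $\simple(\wu)=\simple(\wv)=\{t_1,\dots,t_m\}$ and $\mul(\wu)=\mul(\wv)$, two of the four conditions of Lemma~\ref{L: word problem F}.

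The crux, which I expect to be the main obstacle, is forcing the divider conditions $h_1(\wu,x)=h_1(\wv,x)$ and $h_2(\wu,x)=h_2(\wv,x)$ for each $x\in\mul(\wu)$. I would argue by contradiction. If $h_1(\wu,x)\ne h_1(\wv,x)$, then the first occurrence of $x$ lies in different blocks; by symmetry suppose it is in block $p$ of $\wu$ and block $q$ of $\wv$ with $p<q\le m$. Applying to $\wu\approx\wv$ the substitution that sends the divider $t_{p+1}$ to a fresh letter $y$, fixes $x$, and maps every other letter to $1$, I obtain an identity $x^cyx^d\approx x^{c'}yx^{d'}$ of $\vV$ with $c\ge1$ (the first $x$ precedes $t_{p+1}$ in $\wu$) and $c'=0$ (it follows $t_{p+1}$ in $\wv$). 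Multiplying on the right by $x^n$ and collapsing the trailing powers via \eqref{x^n=x^{n+1}} gives $x^cyx^n\approx yx^n$; repeatedly multiplying on the left by $x^c$ and collapsing climbs the exponent to $n$, yielding $x^nyx^n\approx yx^n$ and hence \eqref{xyx^n=x^nyx^n} in $\vV$, a contradiction. The case $h_2(\wu,x)\ne h_2(\wv,x)$ runs the same way, except that the first occurrences now agree, so the same substitution produces $c\ge2$ and $c'=1$, and the exponent-climbing again delivers \eqref{xyx^n=x^nyx^n}.

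Once all four conditions are established, Lemma~\ref{L: word problem F} gives $\vF\models\wu\approx\wv$, so $\vF\subseteq\vV$, completing the contrapositive. The delicate points I anticipate are the bookkeeping of which block each occurrence of $x$ falls into relative to the chosen divider $t_{p+1}$ (in order to pin down the exponents $c,c'$ exactly), the check that $t_{p+1}$ is a genuine divider (which is where $p<q\le m$ is used), and the inductive exponent-climbing that upgrades a relation of the form $x^cyx^n\approx xyx^n$ with $c\ge2$ (or $x^cyx^n\approx yx^n$ with $c\ge1$) to the target identity.
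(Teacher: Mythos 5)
Your proof is correct and follows essentially the same route as the paper's: both use Lemma~\ref{L: word problem F} for the forward direction and, for the converse, dispose of the commutative case by the same computation, then apply Lemma~\ref{L: equivalent decompositions}, restrict a word-problem-violating identity to the letter $x$ and one suitable divider, right-multiply by $x^n$, and climb the exponent using~\eqref{x^n=x^{n+1}}. The only differences are organizational: the paper argues directly from a single identity of $\vV$ that fails in $\vF$ and reduces the $h_1$-disagreement case to the $h_2$-case by left-multiplying by $xt$, whereas you argue contrapositively over all identities of $\vV$ and treat the two cases separately with the divider $t_{p+1}$ in place of $h_2(\wu,x)$ --- the substance is identical.
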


\begin{proof}
The variety $\vF$ violates the identity~\eqref{xyx^n=x^nyx^n} for any $n\ge2$ by Lemma~\ref{L: word problem F}. Therefore, if $\vV$ satisfies~\eqref{xyx^n=x^nyx^n}, then $\vF\nsubseteq\vV$.

Conversely, suppose that $\vF\nsubseteq\vV$. If $\vV$ is commutative, then it satisfies the identity~\eqref{xyx^n=x^nyx^n} because 
$$
x^nyx^n\stackrel{\text{comm.}}\approx xyx^{2n-1}\stackrel{\eqref{x^n=x^{n+1}}}\approx xyx^n.
$$
Hence it suffices to assume that the variety $\vV$ is non-commutative. By assumption, there exists an identity $\wu \approx \wv$ that is satisfied by the variety $\vV$ but not by the variety $\vF$. Let~\eqref{decomposition of u} be the decomposition of $\wu$. In view of Lemma~\ref{L: equivalent decompositions}, the decomposition of $\wv$ has the form~\eqref{decomposition of v} and $\mul(\wu)=\mul(\wv)$. Lemma~\ref{L: word problem F} implies that there are a letter $x\in\mul(\wu)$ and $k\in\{1,2\}$ such that $h_k(\wu,x)\ne h_k(\wv, x)$. If $k=1$ then we multiply the identity $\wu\approx\wv$ by $xt$ on the left where $t\notin\con(\wu)$. So, we may assume that $k=2$. Suppose that $h_2(\wu,x)=t_i$ and $h_2(\wv,x)=t_j$ where $i\ne j$. We may assume without any loss that $i>j$. Then $\vV$ satisfies the identity
$$
\wu(x,t_i)=xt_ix^p\approx x^qt_ix^r=\wv(x,t_i)
$$
where $p\ge1$, $q\ge 2$ and $r\ge 0$. Now we multiply this identity by $x^n$ on the right. Taking into account the identity~\eqref{x^n=x^{n+1}}, we get that $\vV$ satisfies the identity
\begin{equation}
\label{xyx^n=x^qyx^n}
xyx^n\approx x^qyx^n.
\end{equation}
Since
$$
xyx^n\stackrel{\eqref{xyx^n=x^qyx^n}}\approx x^{1+n(q-1)}yx^n\stackrel{\eqref{x^n=x^{n+1}}}\approx x^nyx^n,
$$
the variety $\vV$ satisfies the identity~\eqref{xyx^n=x^nyx^n}.
\end{proof}

\subsection{The variety $\vQ$}
\label{subsec: var Q}

For an identity system $\Sigma$, we denote by $\var\,\Sigma$ the variety of monoids given by $\Sigma$. Put
$$
\vQ=\var\{\eqref{xyx=xyxx},\,\eqref{xxyy=yyxx},\, xyx^2\approx x^2yx^2\}.
$$

\begin{lemma}[\!{\cite[Lemma~5.1]{Lee-14}}] 
\label{L: word problem Q}
An identity of the form
\begin{equation}
\label{u_0t_0...t_mu_m=v_0t_0...t_mv_m}
\wu_0 \prod_{i=1}^m (t_i\wu_i) \approx \wv_0 \prod_{i=1}^m (t_i\wv_i)
\end{equation}
where $\wu_0, \wv_0, \wu_1,\wv_1,\dots, \wu_m, \wv_m$ are words not containing any of the letters $t_1,t_2,\dots, t_m$ holds in $\vQ$ if and only if $\con(\wu_i)=\con(\wv_i)$ for any $i=0,1,\dots,m$.\qed{\sloppy

}
\end{lemma}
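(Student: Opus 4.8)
The plan is to prove both implications by exploiting the decomposition of words into blocks and dividers introduced in Subsection~\ref{subsec: decompositions of words}. First I would record that $\vQ$ is neither completely regular nor commutative: it satisfies $x^2\approx x^3$ (substitute $1$ for $y$ in~\eqref{xyx=xyxx}) but not $x\approx x^2$, while the word $xy$ is an isoterm for $\vQ$, so that $S(\{xy\})\in\vQ$ by Lemma~\ref{L: isoterm} is a non-commutative member. Consequently Lemma~\ref{L: equivalent decompositions} applies to every identity of $\vQ$; in the identity~\eqref{u_0t_0...t_mu_m=v_0t_0...t_mv_m} the letters $t_1,\dots,t_m$ are exactly the simple letters of both sides, while each block $\wu_i,\wv_i$ consists of multiple letters only. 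Thus the dividers already match on both sides, and the whole problem reduces to controlling the contents of the corresponding blocks.

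For the necessity part I would verify that each of the three defining identities of $\vQ$ relates two words whose decompositions have pairwise equal block contents: for~\eqref{xyx=xyxx} and for $xyx^2\approx x^2yx^2$ the single simple letter is $y$ and both blocks have content $\{x\}$ on either side, whereas~\eqref{xxyy=yyxx} has empty divider set and common content $\{x,y\}$. Since the relation ``same simple letters in the same order, and equal content of each corresponding block'' is readily checked to be a fully invariant congruence on $\sX^\ast$ containing the defining identities, it contains the whole equational theory of $\vQ$; hence $\con(\wu_i)=\con(\wv_i)$ for every $i$ whenever~\eqref{u_0t_0...t_mu_m=v_0t_0...t_mv_m} holds in $\vQ$. (Alternatively, one substitutes $1$ for every letter outside $\{x,t_i,t_{i+1}\}$ to reduce to the transparent case of a single multiple letter flanked by two dividers.)

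For the sufficiency part I would first derive, from~\eqref{xyx=xyxx} and $xyx^2\approx x^2yx^2$, the family of identities
$$
x\ww x\approx x\ww x^2\approx x^2\ww x^2\approx x^2\ww x
$$
valid for every word $\ww$ (obtained by substituting $\ww$ for $y$). These let me freely adjust the two flanking powers of any bracketing pair of occurrences of a letter. Together with the permutation of squares~\eqref{xxyy=yyxx}, I would bring each side of~\eqref{u_0t_0...t_mu_m=v_0t_0...t_mv_m} to a canonical form depending only on the dividers $t_1,\dots,t_m$ and on the contents $\con(\wu_0),\dots,\con(\wu_m)$: inside each block every letter of its content is made to occur exactly once (exactly twice, to preserve multiplicity, when that letter occurs in no other block), and the letters of each block are written in a fixed order. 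Since $\con(\wu_i)=\con(\wv_i)$ for all $i$, the two canonical forms coincide, which yields $\wu\approx\wv$ in $\vQ$.

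The main obstacle is the normalization inside a single block, i.e.\ showing that two multiple letters occurring in one block may be transposed and that several occurrences of one letter in a block may be merged. The idea is to use that every block letter is multiple: a neighbouring occurrence elsewhere in the word lets one inflate a lone $x$ to $x^2$ via $x\ww x\approx x^2\ww x$, after which two adjacent squares can be interchanged by~\eqref{xxyy=yyxx} and then deflated again. Carrying this out while respecting the rigid positions of the dividers, and checking that it never alters any block content, is the delicate bookkeeping on which the proof turns.
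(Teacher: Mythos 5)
The paper itself contains no proof of this lemma: it is quoted, with an immediate \verb|\qed|, from Lee's paper (\cite[Lemma~5.1]{Lee-14}), so there is nothing in-text to compare against and your proposal must stand on its own merits. It essentially does: the two-pronged attack — a syntactic invariant (``same simple letters in the same order, same content in corresponding blocks'' as a fully invariant congruence containing the three defining identities) for necessity, and a canonical-form computation for sufficiency — is the standard and correct way to prove such word-problem lemmas, and the mechanisms you name (the flanking identities $x\ww x\approx x^2\ww x\approx x^2\ww x^2$, swapping of adjacent squares via \eqref{xxyy=yyxx}) are exactly the right tools. Two remarks. First, a point of rigor you gloss over: the claim that $t_1,\dots,t_m$ ``are exactly the simple letters of both sides'' is not a consequence of the stated hypotheses but an extra assumption, and it cannot be dispensed with — your own observation that $S(xy)\in\vQ$ shows that $xy\approx yx$ fails in $\vQ$, yet with $m=0$ this identity has the form~\eqref{u_0t_0...t_mu_m=v_0t_0...t_mv_m} with equal block contents; so the ``if'' direction is false unless one adds the hypothesis that every letter of the $\wu_i,\wv_i$ is multiple in the whole word, i.e.\ $\simple(\wu)=\simple(\wv)=\{t_1,\dots,t_m\}$ (which is how the lemma is invoked throughout the paper, always for decompositions or efficient identities). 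Second, the ``delicate bookkeeping'' you defer is in fact routine with a better choice of canonical form: since every block letter has at least two occurrences, your flanking identities let you inflate \emph{every} occurrence of every block letter to a square, working along consecutive occurrence sites (only exponents at existing sites change, so no letter ever crosses a divider); then each block is a product of squares, which \eqref{xxyy=yyxx} sorts into a fixed order and $x^2\approx x^3$ (put $y\mapsto1$ in \eqref{xyx=xyxx}) merges, giving a normal form that visibly depends only on the dividers and the block contents. With that adjustment, and with the substitution-stability of your invariant checked (the one genuinely non-obvious part of ``readily checked''), your outline becomes a complete proof.
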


In fact, the proof of the following assertion is similar to the proof of Lemma~5.3 in~\cite{Lee-14}. We provide its proof for the sake of completeness.

\begin{lemma}
\label{L: does not contain Q}
Let $\vV$ be a non-completely regular monoid variety that satisfies the identity~\eqref{x^n=x^{n+1}} for some $n\ge2$. Then $\vQ\nsubseteq\vV$ if and only if $\vV$ satisfies the identity
\begin{equation}
\label{x^nyzx^n=x^nyxzx^n}
x^nyzx^n\approx x^nyxzx^n.
\end{equation}
\end{lemma}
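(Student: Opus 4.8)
The plan is to follow the template of Lemma~\ref{L: does not contain F}, using the word problem for $\vQ$ (Lemma~\ref{L: word problem Q}) in place of the word problem for $\vF$. The ``if'' direction is immediate: reading the identity~\eqref{x^nyzx^n=x^nyxzx^n} as $x^n\,y\,z\,x^n\approx x^n\,y\,x\,z\,x^n$, the block lying between the simple letters $y$ and $z$ has content $\varnothing$ on the left but $\{x\}$ on the right, so Lemma~\ref{L: word problem Q} shows that~\eqref{x^nyzx^n=x^nyxzx^n} fails in $\vQ$. Hence if $\vV$ satisfies~\eqref{x^nyzx^n=x^nyxzx^n}, then $\vQ\nsubseteq\vV$.

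For the converse, suppose $\vQ\nsubseteq\vV$, so some identity $\wu\approx\wv$ holds in $\vV$ but fails in $\vQ$. If $\vV$ is commutative, then $x^nyxzx^n\stackrel{\text{comm.}}\approx x^nyzx^{n+1}\stackrel{\eqref{x^n=x^{n+1}}}\approx x^nyzx^n$, so~\eqref{x^nyzx^n=x^nyxzx^n} holds; thus we may assume $\vV$ is non-commutative, which lets us invoke Lemma~\ref{L: equivalent decompositions}. If~\eqref{decomposition of u} is the decomposition of $\wu$, then $\wv$ decomposes as~\eqref{decomposition of v} with the same dividers $t_0,t_1,\dots,t_m$. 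Since the blocks contain no simple letters, $\wu\approx\wv$ is of the form~\eqref{u_0t_0...t_mu_m=v_0t_0...t_mv_m}, and as it fails in $\vQ$, Lemma~\ref{L: word problem Q} yields an index $i$ with $\con(\wu_i)\ne\con(\wv_i)$. Swapping $\wu$ and $\wv$ if necessary, I fix a letter $x\in\con(\wu_i)\setminus\con(\wv_i)$; note $x$ is multiple, being confined to blocks.

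To avoid separate arguments for the boundary blocks, I would first multiply $\wu\approx\wv$ by a fresh letter on each side, replacing it by $s\wu t\approx s\wv t$; now every original block, the $i$th one in particular, is flanked by two distinct simple letters $y,z$, neither equal to $x$. Projecting $s\wu t\approx s\wv t$ onto $\{x,y,z\}$ (i.e.\ substituting $1$ for all other letters) gives an identity $x^ayx^bzx^c\approx x^{a'}yzx^{c'}$ of $\vV$ with $b=\occ_x(\wu_i)\ge1$ and $\occ_x(\wv_i)=0$. Multiplying on both sides by $x^n$ and absorbing the outer powers via~\eqref{x^n=x^{n+1}} produces
$$
x^nyx^bzx^n\approx x^nyzx^n,\qquad b\ge1.
$$
The main obstacle is that $b$ is not under our control, whereas the target~\eqref{x^nyzx^n=x^nyxzx^n} has exactly one $x$ between $y$ and $z$; this is the heart of the argument. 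I would overcome it in two substitution steps. First, replacing $x$ by $x^n$ throughout and absorbing powers through~\eqref{x^n=x^{n+1}} normalizes the middle factor, giving $x^nyx^nzx^n\approx x^nyzx^n$. Second, substituting $xz$ for $z$ yields $x^nyx^{n+1}zx^n\approx x^nyxzx^n$, hence $x^nyx^nzx^n\approx x^nyxzx^n$ after one more use of~\eqref{x^n=x^{n+1}}. Comparing these two identities gives $x^nyxzx^n\approx x^nyzx^n$, which is precisely~\eqref{x^nyzx^n=x^nyxzx^n}, completing the proof.
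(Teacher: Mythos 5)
Your proof is correct and follows essentially the same route as the paper: the ``if'' direction via Lemma~\ref{L: word problem Q}, the commutative case dispatched by~\eqref{x^n=x^{n+1}}, then Lemmas~\ref{L: equivalent decompositions} and~\ref{L: word problem Q} to locate a block with mismatched content, a projection to a three-letter identity, normalization by substituting $x^n$ for $x$, and the final substitution of $xz$ for $z$. The only (immaterial) difference is bookkeeping at the boundary: the paper pre-multiplies by $xh_1$ and post-multiplies by $h_2x$ so that all exponents in the projected identity are already positive, whereas you adjoin fresh simple letters and then pad both sides with $x^n$ after projecting.
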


\begin{proof}
The variety $\vQ$ violates the identity~\eqref{x^nyzx^n=x^nyxzx^n} for any $n\ge1$ by Lemma~\ref{L: word problem Q}. 
Therefore, if $\vV$ satisfies~\eqref{x^nyzx^n=x^nyxzx^n}, then $\vQ\nsubseteq\vV$.

Conversely, suppose that $\vQ\nsubseteq\vV$. If $\vV$ is commutative, then it satisfies the identity~\eqref{x^nyzx^n=x^nyxzx^n} because 
$$
x^nyxzx^n\stackrel{\text{comm.}}\approx x^{n+1}yzx^n\stackrel{\eqref{x^n=x^{n+1}}}\approx x^nyzx^n.
$$
Hence it suffices to assume that the variety $\vV$ is non-commutative. 
By assumption, there exists an identity $\wu \approx \wv$ that is satisfied by the variety $\vV$ but not by the variety $\vQ$. Let~\eqref{decomposition of u} be the decomposition of $\wu$. In view of Lemma~\ref{L: equivalent decompositions}, the decomposition of $\wv$ has the form~\eqref{decomposition of v}. According to Lemma~\ref{L: word problem Q}, $\con(\wu_i)\ne\con(\wv_i)$ for some
$i\in\{0,1,\dots,m\}$, say $x\in\con(\wu_i)\setminus\con(\wv_i)$.

Evidently, the identity $\wu \approx \wv$ implies the identity $xh_1\wu h_2x \approx xh_1\wv h_2x$ where $h_1,h_2\notin\con(\wu)=\con(\wv)$. The last identity implies the identity
\begin{equation}
\label{x^pt_ix^qt_{i+1}x^r=x^st_it_{i+1}x^t}
x^pt_ix^qt_{i+1}x^r \approx x^st_it_{i+1}x^t
\end{equation}
for some $p,q,r,s,t\ge1$. Then the identity
\begin{equation}
\label{x^nyx^nzx^n=x^nyzx^n}
x^nyx^nzx^n \approx x^nyzx^n.
\end{equation}
holds in $\vV$ because
$$
x^nyx^nzx^n \stackrel{\eqref{x^n=x^{n+1}}}\approx (x^n)^py(x^n)^qz(x^n)^r\stackrel{\eqref{x^pt_ix^qt_{i+1}x^r=x^st_it_{i+1}x^t}}\approx (x^n)^syz(x^n)^t \stackrel{\eqref{x^n=x^{n+1}}}\approx x^nyzx^n.
$$
Finally, since
$$
x^nyzx^n \stackrel{\eqref{x^nyx^nzx^n=x^nyzx^n}}\approx x^nyx^nzx^n \stackrel{\eqref{x^n=x^{n+1}}}\approx x^nyx^{n+1}zx^n \stackrel{\eqref{x^nyx^nzx^n=x^nyzx^n}}\approx x^nyxzx^n,
$$
the variety $\vV$ satisfies the identity~\eqref{x^nyzx^n=x^nyxzx^n}.
\end{proof}

A word $\ww$ is said to be $n$-\textit{limited} if $\occ_x(\ww)\le n$ for any $x\in\con(\ww)$.

\begin{corollary}
\label{C: w is equivalent to (2n+2)-limited word}
Let $\ww$ be a word and $\vV$ a monoid variety that satisfies the identities~\eqref{x^n=x^{n+1}} and
$$
\kappa_{n,j}:\enskip \prod_{i=0}^n(t_ix)\approx \Bigl(\prod_{i=0}^{j-1}(t_ix)\Bigr) \bigl(t_jx^2\bigr) \Bigl(\prod_{i=j+1}^n(t_ix)\Bigr)
$$
for some $n\ge1$ and $0\le j\le n$. If $\vQ\nsubseteq\vV$, then there is a $(2n+2)$-limited word $\ww'$ such that $\ww\approx\ww'$ is satisfied by $\vV$.
\end{corollary}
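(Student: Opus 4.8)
The plan is to bound the occurrences of one letter at a time, first using the family $\kappa_{n,j}$ to manufacture a run $x^n$ at each end of the word and then using the consequence of $\vQ\nsubseteq\vV$ supplied by Lemma~\ref{L: does not contain Q} to erase every occurrence of $x$ lying strictly between these two runs. Two degenerate cases are disposed of at the outset. If $\vV$ is commutative, then modulo~\eqref{x^n=x^{n+1}} the word $\ww$ is equivalent to the ordered product of its letters with each exponent capped at $n$, so $\ww$ is already equivalent to an $n$-limited word. If $\vV$ is completely regular, then~\eqref{x^n=x^{n+1}} forces every element to be idempotent, and since the completely regular varieties relevant here are subvarieties of $\Acom$, Observation~\ref{O: cr in Acom} gives $\vV\in\{\vT,\vSL\}$, where every word is $1$-limited. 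Hence I may assume $\vV$ is non-completely regular; such a variety does not satisfy $x\approx x^2$, so $n\ge2$, and Lemma~\ref{L: does not contain Q} shows $\vV$ satisfies~\eqref{x^nyzx^n=x^nyxzx^n}, which in the presence of~\eqref{x^n=x^{n+1}} is equivalent to~\eqref{x^nyx^nzx^n=x^nyzx^n}.

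I would then induct on the length of $\ww$: it suffices, whenever some letter $x$ has $\occ_x(\ww)>2n+2$, to exhibit a strictly shorter word equivalent to $\ww$ in $\vV$. Because every manipulation below alters only the occurrences of $x$, the multiplicities of the other letters are untouched and the induction disposes of them in turn. Let $o_1,\dots,o_N$ be the occurrences of $x$ in $\ww$ from left to right, where $N=\occ_x(\ww)>2n+2$, and write the factor of $\ww$ spanned by $o_1,\dots,o_{n+1}$ as $o_1\ws_1 o_2\ws_2\cdots\ws_n o_{n+1}$ with each $\ws_i$ free of $x$ and possibly empty. Reading this factor as the substitution instance $t_0\mapsto1$, $t_i\mapsto\ws_i$ of the left-hand side of $\kappa_{n,0}$ converts $o_1$ into $x^2$; re-reading the result as $x\cdot(x\ws_1 o_2\cdots)$ and applying $\kappa_{n,0}$ again lengthens the initial run, and after $n-1$ steps $o_1$ has grown into a \emph{left wall} $x^n$ while $o_2,\dots,o_{n+1}$ survive unchanged. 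Symmetrically, $\kappa_{n,n}$ applied to the last $n+1$ occurrences grows $o_N$ into a \emph{right wall} $x^n$; adjacent occurrences cause no difficulty, since they merely correspond to empty separators. As $N>2n+2$, the two blocks of helper occurrences are disjoint, so both walls genuinely appear and the word takes the form
\[
\wa\,x^n\,\wv\,x^n\,\wb,
\]
where $\wa$ and $\wb$ contain no $x$.

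With both walls in place I would clear the middle. Any occurrence of $x$ inside $\wv$ is flanked by the two walls, so the word contains a factor $x^n\wu x\wu' x^n$; applying~\eqref{x^nyzx^n=x^nyxzx^n} in the form $x^nyxzx^n\approx x^nyzx^n$ deletes that occurrence, and repeated deletions (using~\eqref{x^nyx^nzx^n=x^nyzx^n} to collapse any inner run in one stroke) remove every $x$ from $\wv$. The outcome is a word in which $x$ occurs exactly $2n\le2n+2$ times; it is strictly shorter than $\ww$ while every other letter keeps its multiplicity, so the induction closes and produces the required $(2n+2)$-limited word.

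The delicate part is the wall-building. One has to apply $\kappa_{n,j}$ honestly as a substitution instance to a designated factor, track how the matching is refreshed as the run grows, and confirm that empty separators $\ws_i$ — that is, occurrences of $x$ that are already consecutive — do not obstruct the construction. A further point demanding attention is that the deletion identity~\eqref{x^nyzx^n=x^nyxzx^n} requires \emph{both} walls simultaneously; consequently the left and right walls must be completed before any erasing begins, and it is precisely the bound $\occ_x(\ww)>2n+2$ that keeps the two groups of $n+1$ helper occurrences disjoint and thereby makes the construction of both walls possible.
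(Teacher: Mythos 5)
Your core mechanism --- pump occurrences of $x$ up to $x^n$ with $\kappa$, erase occurrences trapped between two blocks $x^n$ via the identity \eqref{x^nyzx^n=x^nyxzx^n} supplied by Lemma~\ref{L: does not contain Q}, then count what survives --- is exactly the paper's, but two steps do not go through as written. The most serious one is that your wall-building uses $\kappa_{n,0}$ and $\kappa_{n,n}$, whereas the hypothesis provides $\kappa_{n,j}$ for a \emph{single} value of $j$ with $0\le j\le n$; nothing allows you to move the duplicated position to the two extreme occurrences. The repair is not merely notational. If you build the walls where you are actually allowed to, namely at $o_{j+1}$ (inside the first $n+1$ occurrences) and at $o_{N-n+j}$ (inside the last $n+1$), then the occurrences $o_1,\dots,o_j$ and $o_{N-n+j+1},\dots,o_N$ lie \emph{outside} the walls and cannot be erased, so your ``delete everything between the walls and stop'' accounting leaves $j+n+n+(n-j)=3n$ occurrences of $x$, which exceeds $2n+2$ once $n\ge3$. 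One must additionally collapse both walls back to single occurrences by reading $\kappa_{n,j}$ from right to left, and that step needs a further check precisely because the helper occurrences used to instantiate $\kappa_{n,j}$ have just been deleted: the instantiation must be re-matched using surviving occurrences (each wall can serve as the other wall's helpers). The paper's proof packages this pump--insert/delete--unpump cycle, performed at offset $j$ in both end groups simultaneously, into the single reusable identity \eqref{..yz..=yxz..}, and then deletes the $(n+2)$th occurrence of a letter repeatedly; your argument needs to be reorganized along those lines.

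The second gap is your disposal of the completely regular case: Observation~\ref{O: cr in Acom} presupposes $\vV\subseteq\Acom$, which is not among the hypotheses of the corollary, and your assertion that ``the completely regular varieties relevant here are subvarieties of $\Acom$'' is unjustified. A completely regular variety satisfying \eqref{x^n=x^{n+1}} is a variety of band monoids, and these need not lie in $\Acom$. Worse, the case cannot be argued at all: the variety of \emph{all} band monoids satisfies every hypothesis (with $n=1$, both \eqref{x^n=x^{n+1}} and $\kappa_{1,j}$ follow from $x\approx x^2$, and $\vQ\nsubseteq\vV$ since $\vQ$ violates $x\approx x^2$), yet the conclusion fails for it, because the free band monoid on $k$ letters has doubly exponentially many elements while there are only singly exponentially many $(2n+2)$-limited words over $k$ letters, so for large $k$ some word is equivalent to no $(2n+2)$-limited word. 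In fairness, the statement itself is loose here: the paper's own proof silently imports the hypotheses of Lemma~\ref{L: does not contain Q} (non-completely regular, whence the given $n$ is automatically at least $2$), and these do hold in the only place the corollary is used, namely Proposition~\ref{P: aperiodic almost Cross in O}. Your instinct that the case needed attention was correct, but the fix is to add the non-completely-regular hypothesis, not to argue the case away. (Your separate commutative case, by contrast, is harmless but unnecessary, since Lemma~\ref{L: does not contain Q} handles commutative varieties internally.)
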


\begin{proof}
In view of Lemma~\ref{L: does not contain Q}, $\vV$ satisfies~\eqref{x^nyzx^n=x^nyxzx^n}. It follows that the identities
$$
\begin{aligned}
&x\Bigl(\prod_{i=1}^n(t_ix)\Bigr)\,yz\,x\Bigl(\prod_{i=1}^n(h_ix)\Bigr)\\
\stackrel{\kappa_{n,j}}\approx {}&x\Bigl(\prod_{i=1}^{j-1}(t_ix)\Bigr) \bigl(t_jx^n\bigr) \Bigl(\prod_{i=j+1}^n(t_ix)\Bigr)\,yz\,x\Bigl(\prod_{i=1}^{j-1}(h_ix)\Bigr) \bigl(h_jx^n\bigr) \Bigl(\prod_{i=j+1}^n(h_ix)\Bigr)\\
\stackrel{\eqref{x^nyzx^n=x^nyxzx^n}}\approx {}&x\Bigl(\prod_{i=1}^{j-1}(t_ix)\Bigr) \bigl(t_jx^n\bigr) \Bigl(\prod_{i=j+1}^n(t_ix)\Bigr)\,yxz\,x\Bigl(\prod_{i=1}^{j-1}(h_ix)\Bigr) \bigl(h_jx^n\bigr) \Bigl(\prod_{i=j+1}^n(h_ix)\Bigr)\\
\stackrel{\kappa_{n,j}}\approx {}&x\Bigl(\prod_{i=1}^n(t_ix)\Bigr)\,yxz\,x\Bigl(\prod_{i=1}^n(h_ix)\Bigr)
\end{aligned}
$$
hold in $\vV$. We see that $\vV$ satisfies the identity
\begin{equation}
\label{..yz..=yxz..}
x\Bigl(\prod_{i=1}^n(t_ix)\Bigr)\,yz\,x\Bigl(\prod_{i=1}^n(h_ix)\Bigr)\approx x\Bigl(\prod_{i=1}^n(t_ix)\Bigr)\,yxz\,x\Bigl(\prod_{i=1}^n(h_ix)\Bigr).
\end{equation}
We note that the identity~\eqref{..yz..=yxz..} allows us to delete the $(n+2)$th occurrence of some letter $x$ in a word $\ww$ whenever $\occ_x(\ww)>2n+2$. This implies that, for any $\ww\in\sX^\ast$, there is a $(2n+2)$-limited word $\ww'$ such that $\ww\approx\ww'$ is satisfied by $\vV$.
\end{proof}

\subsection{The variety $\vR$ and its subvarieties}
\label{subsec: var R}

Let $\vR$ denote the monoid variety given by the identities~\eqref{xxyy=yyxx} and
\begin{equation}
\label{xzxyxty=xzyxty}
xzxyxty\approx xzyxty.
\end{equation}
Put $\vE=\var\{x^2\approx x^3,\,x^2y\approx xyx,\,\eqref{xxyy=yyxx}\}$. If $\vV$ is a monoid variety, then we denote by $\overleftarrow{\vV}$ the variety \textit{dual to} $\vV$, i.e., the variety consisting of monoids antiisomorphic to monoids from $\vV$. For any $n\ge1$, we put
$$
\we_n=
\begin{cases}
x&\text{if }n\text{ is odd},\\
y&\text{if }n\text{ is even}.
\end{cases}
$$

The following statement establishes restrictions on the type of identities that can be
used to define some subvarieties of $\vR$.

\begin{lemma}[\!{\cite[Lemmas~3.4 and~3.5]{Gusev-21}}] 
\label{L: R subvarieties}
Each subvariety of $\vR$ containing $\vF\vee\overleftarrow{\vE}$ is defined by the identities~\eqref{xxyy=yyxx} and~\eqref{xzxyxty=xzyxty} together with some of the following identities: \eqref{xyzxy=yxzxy} 
\begin{align}
\label{yxxtxy=xyxtxy}
yx^2txy&\approx xyxtxy,\\
\label{xxytxy=xyxtxy}
x^2ytxy&\approx xyxtxy,\\
\label{xyxztx=xyxzxtx}
xyxztx&\approx xyxzxtx,\\
\notag
\alpha_n:\enskip xy\prod_{i=1}^{n+1} (t_i \we_i)&\approx yx\prod_{i=1}^{n+1} (t_i \we_i),\\
\notag
\beta_n:\enskip yx^2\prod_{i=2}^{n+1} (t_i \we_i)&\approx xyx\prod_{i=2}^{n+1} (t_i \we_i),\\
\notag
\gamma_n:\enskip x^2y\prod_{i=1}^{n+1} (t_i \we_i)&\approx xyx\prod_{i=1}^{n+1} (t_i \we_i),\\
\notag
\gamma_n':\enskip x^2y\prod_{i=2}^{n+1} (t_i \we_i)&\approx xyx\prod_{i=2}^{n+1} (t_i \we_i),
\end{align}
where $n\ge1$.\qed
\end{lemma}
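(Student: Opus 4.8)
The plan is to classify, \emph{modulo} the defining identities \eqref{xxyy=yyxx} and \eqref{xzxyxty=xzyxty} of $\vR$, all identities that hold in $\vF\vee\overleftarrow{\vE}$, and to show that each such identity is equivalent to a conjunction of identities from the displayed list. This suffices because a subvariety $\vU$ with $\vF\vee\overleftarrow{\vE}\subseteq\vU\subseteq\vR$ is precisely $\vR$ cut down by the identities valid in $\vU$, and every identity of $\vU$ holds in $\vF\vee\overleftarrow{\vE}$, equivalently in both $\vF$ and $\overleftarrow{\vE}$. Reading off a basis of $\vU$ from the list then yields the claim.

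First I would verify the inclusion-friendly direction: every identity in the list holds in $\vF\vee\overleftarrow{\vE}$, so that the base together with any subset of the list defines a variety lying between $\vF\vee\overleftarrow{\vE}$ and $\vR$. This is a routine check against the word problem for $\vF$ (Lemma~\ref{L: word problem F}) and the dual word problem for $\overleftarrow{\vE}$: each listed identity preserves $\simple$ and $\mul$, preserves the dividers $h_1(\cdot,x)$ and $h_2(\cdot,x)$ demanded by $\vF$, and dually preserves the data recorded by $\overleftarrow{\vE}$ (in particular the last occurrences of letters).

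The substance is the converse. Let $\wu\approx\wv$ hold in $\vF\vee\overleftarrow{\vE}$. By Lemma~\ref{L: word problem F} and its dual, $\simple(\wu)=\simple(\wv)$ and $\mul(\wu)=\mul(\wv)$, the two words admit a common decomposition in the sense of Lemma~\ref{L: equivalent decompositions}, and the first, second, and (dually) last occurrences of every letter are pinned to the same dividers. I would then use the base identities of $\vR$ as rewriting rules --- \eqref{xxyy=yyxx} to transpose adjacent squares and \eqref{xzxyxty=xzyxty} to delete a redundant intermediate occurrence of a letter --- to drive $\wu$ and $\wv$ into a common normal form in which each letter has only its first, second, and last occurrences surviving and the block structure is as simple as possible. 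Since all the outer occurrences are already fixed, the sole residual discrepancy between the normalized words can lie only in the placement of an interior (second) occurrence of a multiple letter relative to a divider, or in the order of two multiple letters at the very start of the word.

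Finally, I would run a case analysis on this residual discrepancy, matching each elementary rearrangement to a listed identity: a swap of the two leading letters yields \eqref{xyzxy=yxzxy} or one of the $\alpha_n$; a leading discrepancy $yx^2$ versus $xyx$ yields \eqref{yxxtxy=xyxtxy} or $\beta_n$; a leading discrepancy $x^2y$ versus $xyx$ yields \eqref{xxytxy=xyxtxy}, $\gamma_n$, or $\gamma_n'$; and a shift of an interior occurrence yields \eqref{xyxztx=xyxzxtx}. The unbounded families arise exactly when the discrepancy reaches across $n+1$ blocks, and I would relate such a long identity to the corresponding scheme by a substitution argument in the spirit of Observation~\ref{O: equivalent identities}, collapsing or expanding the intervening blocks. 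I expect the main obstacle to be the completeness and bookkeeping of this last step: one must show that no other type of discrepancy survives normalization and that the parameter $n$ of $\alpha_n,\beta_n,\gamma_n,\gamma_n'$ is correctly recovered from the span of the discrepancy, which calls for a careful induction on the number of blocks and on word length.
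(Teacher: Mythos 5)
First, a point of comparison: the paper contains no proof of this lemma at all --- it is stated without proof, being imported from \cite[Lemmas~4.4 and~4.5]{Gusev-JAA}, where the actual argument is a lengthy syntactic classification carried out with the machinery of decompositions, blocks and $1$-dividers. So your proposal can only be judged against that external argument and against the standard of being a proof at all. Your opening reduction is correct and is the right frame: if $\vF\vee\overleftarrow{\vE}\subseteq\vV\subseteq\vR$, then every identity of $\vV$ holds in $\vF\vee\overleftarrow{\vE}$, so it suffices to show that every identity valid in $\vF\vee\overleftarrow{\vE}$ is interderivable, modulo \eqref{xxyy=yyxx} and \eqref{xzxyxty=xzyxty}, with a set of identities from the displayed list. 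After that, however, the text consists of declarations of intent (``I would then use\dots'', ``I would run a case analysis\dots''), and you yourself defer ``the completeness and bookkeeping of this last step'' and the correct recovery of the parameter $n$. Those are not loose ends to be tidied up; they are the entire mathematical content of the lemma, and nothing in the proposal establishes them.

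Three gaps in particular. (i) You appeal to ``the dual word problem for $\overleftarrow{\vE}$'', which is stated nowhere: the paper supplies a solved word problem only for $\vF$ (Lemma~\ref{L: word problem F}), so you would first have to derive, from the basis $\{x^2\approx x^3,\ x^2y\approx xyx,\ \eqref{xxyy=yyxx}\}$ of $\vE$, a characterization of its identities (same simple and multiple parts and same first-occurrence dividers $h_1(\cdot,x)$, then dualize); this is doable but absent. (ii) Your normalization claim is unproven and more delicate than you allow: the only occurrence-deleting rule available, \eqref{xzxyxty=xzyxty}, has instances of the shape $x\wa x\wb x\wt\wb\approx x\wa\wb x\wt\wb$ with $\wa,\wb,\wt\in\sX^\ast$, so it removes the middle of three occurrences of $x$ only when the factor $\wb$ separating the second and third occurrences reappears later in the word; it is not an unconditional ``delete interior occurrences'' rule, and the assertion that every word reduces, modulo the two base identities of $\vR$ alone, to a form in which only first, second and last occurrences survive needs its own nontrivial proof. (iii) The exhaustiveness of the final case analysis --- that every residual discrepancy matches one of the listed shapes, and only those, with the right index $n$ --- is precisely what you concede is the main obstacle; without it the lemma is not proved. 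Finally, a symptom that the ``routine check'' of your first step was never actually performed: $\beta_n$ as printed in the statement contains the letter $t_1$ on its right-hand side only, so by Lemma~\ref{L: word problem F} it cannot hold in $\vF$ (the simple-letter sets of the two sides differ); the intended reading has both products starting at $i=2$, which is how $\beta_1$ is applied in the proof of Theorem~\ref{T: main result}. Carrying out the verification you claim is routine would have forced you to notice and resolve this discrepancy.
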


\section{The variety $\vP$}
\label{sec: var P}

Put $\vP=\var\{\eqref{xyx=xyxx},\,\eqref{xxyy=yyxx},\,\eqref{xyzxy=yxzxy},\,\beta_1,\,\gamma_1'\}$. The main goal of this section is to verify that $\vP$ and so $\overleftarrow{\vP}$ are almost Cross varieties.

\begin{proposition}
\label{P: P is almost Cross}
The variety $\vP$ is a non-finitely generated almost Cross subvariety of $\Acom$. The lattice $L(\vP)$ has the form shown in Fig.~\ref{pic: L(P)}, where
$$
\begin{aligned}
&\vC=\var\{x^2\approx x^3,\,xy\approx yx\},\\
&\vD=\var\{x^2\approx x^3,\,x^2y\approx xyx\approx yx^2\},\\
&\vH=\vP\{\eqref{xyxztx=xyxzxtx}\},\\
&\vP_n=\vP\{\alpha_n\},\ \ n\ge1.
\end{aligned}
$$
\end{proposition}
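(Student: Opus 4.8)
The plan is to establish the three assertions of the Proposition separately: that $\vP$ is almost Cross, that it is non-finitely generated, and that its subvariety lattice $L(\vP)$ has the depicted shape. These are logically intertwined, since ``almost Cross'' means non-Cross but with every proper subvariety Cross, so the bulk of the work will be a precise description of $L(\vP)$ together with verification that $\vP$ itself fails to be Cross while each listed proper subvariety is Cross.

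\medskip

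First I would verify that $\vP\subseteq\Acom$ and that $\vP$ is non-finitely generated. Membership in $\Acom$ follows by checking that the defining identities of $\vP$ force aperiodicity (via~\eqref{xyx=xyxx}, which by Lemma~\ref{L: xyx=xyxx is locally finite} also makes $\vP$ locally finite) and commuting idempotents; concretely I would show $\vP$ satisfies~\eqref{x^n=x^{n+1}} and~\eqref{x^ny^n=y^nx^n} for suitable $n$ so that Observation~\ref{O: var in Acom} applies. For non-finite-generation, since $\vP$ is locally finite, Lemma~\ref{L: LF+small=FG} tells me it suffices to exhibit a strictly increasing infinite chain of subvarieties whose join is $\vP$; the natural candidates are the $\vP_n=\vP\{\alpha_n\}$. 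The key point is that the identities $\alpha_n$ for increasing $n$ are genuinely weaker (each $\alpha_{n+1}$ follows from $\alpha_n$ modulo the base, but not conversely), giving $\vP_1\subset\vP_2\subset\cdots$ with $\bigvee_n\vP_n=\vP$; separating these varieties requires a word-problem or isoterm argument distinguishing the two sides of $\alpha_n$ in $\vP_{n-1}$ but not in $\vP_n$.

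\medskip

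The heart of the proof is computing $L(\vP)$. I would develop a solvable word problem for $\vP$ in the spirit of Lemmas~\ref{L: word problem F} and~\ref{L: word problem Q}, describing exactly when $\wu\approx\wv$ holds in $\vP$ in terms of decompositions (Lemma~\ref{L: equivalent decompositions}), contents of blocks, and the divider-data $h_1,h_2$. With such a normal form in hand, I would classify subvarieties by determining which identities can define them. Because $\vP$ is a subvariety of $\vR$ once I check it satisfies~\eqref{xzxyxty=xzyxty} and contains $\vF\vee\overleftarrow{\vE}$, Lemma~\ref{L: R subvarieties} restricts the possible defining identities to~\eqref{yxxtxy=xyxtxy}--\eqref{xyxztx=xyxzxtx} and the families $\alpha_n,\beta_n,\gamma_n,\gamma_n'$. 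I would then show that, modulo the defining relations of $\vP$, these collapse to just the identities distinguishing $\vH=\vP\{\eqref{xyxztx=xyxzxtx}\}$ and the chain $\vP_n=\vP\{\alpha_n\}$, together with the bottom pieces $\vT,\vSL,\vC,\vD$ whose positions are pinned down by Observations~\ref{O: cr in Acom} and~\ref{O: cr and comm are Cross} and the known covers of $\vT$. Each proper subvariety is locally finite and small, hence finitely generated by Lemma~\ref{L: LF+small=FG}, and finitely based; so each is Cross. Since $\vP=\bigvee_n\vP_n$ is not finitely generated it is not Cross, which with the previous sentence shows $\vP$ is almost Cross.

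\medskip

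The main obstacle I expect is the word-problem and the ensuing subvariety classification: proving that the a priori infinite list of candidate identities from Lemma~\ref{L: R subvarieties} really produces only the finitely many ``branches'' shown in the figure plus the single infinite ascending chain $\{\vP_n\}$. This demands careful deductions showing, for instance, that $\beta_n$ and $\gamma_n$ for $n\ge1$ are each either redundant in $\vP$ or equivalent to one of the marked identities, and that $\alpha_n$ strictly refines as $n$ grows while $\alpha_n\Rightarrow\eqref{xyxztx=xyxzxtx}$ fails and vice versa, so that $\vH$ and the $\vP_n$ sit in incomparable positions as drawn. Verifying these implications and non-implications rests entirely on the normal form, so getting a clean, correct statement of the word problem for $\vP$ is both the linchpin and the most delicate step.
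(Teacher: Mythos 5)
Your skeleton matches the paper's in its outer parts: like the paper, you use Lemma~\ref{L: R subvarieties} to control the subvarieties at the top of the lattice, you prove strictness of the chain $\vP_1\subset\vP_2\subset\cdots$ by a syntactic argument on the words $xy\prod_i(t_i\we_i^{k_i})$, and you derive non-finite generation of $\vP$ (hence its non-Crossness) from this chain via Lemmas~\ref{L: LF+small=FG} and~\ref{L: xyx=xyxx is locally finite}.

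However, there is a genuine gap in the middle of the lattice. Lemma~\ref{L: R subvarieties} applies only to subvarieties of $\vR$ that \emph{contain} $\vF\vee\overleftarrow{\vE}$, and Observations~\ref{O: cr in Acom} and~\ref{O: cr and comm are Cross} dispose only of the completely regular and commutative subvarieties, i.e.\ of $\vT$, $\vSL$ and $\vC$. This leaves unclassified every subvariety of $\vP$ that is neither commutative nor above $\vF\vee\overleftarrow{\vE}$ --- precisely the region of Fig.~\ref{pic: L(P)} containing $\vD$, $\vE$, $\overleftarrow{\vE}$, $\vF$, $\vQ$, $\vH$ and their joins. Your only proposed tool for this region is a full solution of the word problem for $\vP$, which you do not develop and which would be a substantial project in its own right. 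The paper avoids it entirely by a two-step exclusion argument: if $\vQ\nsubseteq\vV$, then Lemma~\ref{L: does not contain Q} gives $\vV$ the identity \eqref{xxyzxx=xxyxzxx}, which combined with \eqref{xyx=xyxx} yields \eqref{xyxztx=xyxzxtx}, so $\vV\subseteq\vH$, whose subvariety lattice is already known from~\cite{Gusev-20}; if $\vQ\subseteq\vV$ but $\vF\nsubseteq\vV$, then Lemma~\ref{L: does not contain F} gives $\vV\subseteq\vQ$, whose lattice is known from~\cite{Lee-14}. Only after this reduction, when $\vF\vee\vQ\subseteq\vV$ (so that in particular $\vF\vee\overleftarrow{\vE}\subseteq\vV$), does Lemma~\ref{L: R subvarieties} enter, supplemented by the computations $\vF\vee\vQ=\vP_1$ and $\vH\vee\vQ=\vP_2$ that glue the chain onto the lower part of the lattice. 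Without these exclusion lemmas (or a worked-out word problem for $\vP$ playing their role), your plan cannot account for the middle of $L(\vP)$, so the assertion that every proper subvariety of $\vP$ is Cross remains unproved.
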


\begin{figure}[htb]
\unitlength=1mm
\linethickness{0.4pt}
\begin{center}
\begin{picture}(55,115)
\put(35,5){\circle*{1.33}}
\put(35,15){\circle*{1.33}}
\put(35,25){\circle*{1.33}}
\put(35,35){\circle*{1.33}}
\put(45,45){\circle*{1.33}}
\put(25,45){\circle*{1.33}}
\put(35,55){\circle*{1.33}}
\put(15,55){\circle*{1.33}}
\put(25,65){\circle*{1.33}}
\put(45,65){\circle*{1.33}}
\put(15,75){\circle*{1.33}}
\put(35,75){\circle*{1.33}}
\put(25,85){\circle*{1.33}}
\put(25,95){\circle*{1.33}}
\put(25,105){\circle*{1.33}}
\put(25,115){\circle*{1.33}}
\put(25,112){\makebox(0,0)[cc]{$\vdots$}}

\put(35,5){\line(0,1){30}}
\put(35,35){\line(-1,1){20}}
\put(35,35){\line(1,1){10}}
\put(25,45){\line(1,1){10}}
\put(25,45){\line(1,1){20}}
\put(15,55){\line(1,1){20}}
\put(45,45){\line(-1,1){30}}
\put(45,65){\line(-1,1){20}}
\put(15,75){\line(1,1){10}}
\put(25,85){\line(0,1){23}}

\put(35,2){\makebox(0,0)[cc]{$\vT$}}
\put(37,15){\makebox(0,0)[lc]{$\vSL$}}
\put(37,25){\makebox(0,0)[lc]{$\vC$}}
\put(37,35){\makebox(0,0)[lc]{$\vD$}}
\put(23,45){\makebox(0,0)[rc]{$\vE$}}
\put(13,55){\makebox(0,0)[rc]{$\vF$}}
\put(47,45){\makebox(0,0)[lc]{$\overleftarrow{\vE}$}}
\put(47,65){\makebox(0,0)[lc]{$\vQ$}}
\put(37,75){\makebox(0,0)[lc]{$\vP_1$}}
\put(27,85){\makebox(0,0)[lc]{$\vP_2$}}
\put(27,95){\makebox(0,0)[lc]{$\vP_3$}}
\put(27,105){\makebox(0,0)[lc]{$\vP_4$}}
\put(27,115){\makebox(0,0)[lc]{$\vP$}}
\put(13,75){\makebox(0,0)[rc]{$\vH$}}
\end{picture}
\end{center}
\caption{The lattice $L(\vP)$}
\label{pic: L(P)}
\end{figure}

\begin{proof}
Let $\vV$ be a proper subvariety of $\vP$. If $\vV$ is completely regular, then $\vV\in\{\vT,\vSL\}$ by Observation~\ref{O: cr in Acom}. So, we may assume that $\vV$ is not completely regular. If $\vQ\nsubseteq \vV$, then $\vV$ satisfies the identity 
\begin{equation}
\label{xxyzxx=xxyxzxx}
x^2yzx^2\approx x^2yxzx^2
\end{equation}
by Lemma~\ref{L: does not contain Q}.
Then the identities
$$
xyxztx\stackrel{\eqref{xyx=xyxx}}\approx xyx^2ztx^2\stackrel{\eqref{xxyzxx=xxyxzxx}}\approx  xyx^2zxtx^2\stackrel{\eqref{xyx=xyxx}}\approx xyxzxtx
$$
hold in $\vV$, whence $\vV\subseteq\vH$. The lattice $L(\vH)$ has the form shown in Fig.~\ref{pic: L(P)}~\cite[Proposition~3.1]{Gusev-20}. So, we may assume that $\vQ\subseteq\vV$. If $\vF\nsubseteq \vV$, then $\vV\subseteq\vQ$ by Lemma~\ref{L: does not contain F}. The results of \cite[Section~5]{Lee-14} imply that the lattice $L(\vQ)$ is as shown in Fig.~\ref{pic: L(P)}. Thus, we may assume that $\vF\vee\vQ\subseteq \vV$. Clearly, $\vP$ satisfies the identities~\eqref{xyzxy=yxzxy}, \eqref{yxxtxy=xyxtxy}, \eqref{xxytxy=xyxtxy}, $\beta_n$, $\gamma_n$ and $\gamma_n'$ for any $n\ge1$. Then Lemma~\ref{L: R subvarieties} and the fact that $\vQ$ violates~\eqref{xyxztx=xyxzxtx} imply that $\vV=\vP_n$ for some $n\ge1$. It follows that $\vF\vee\vQ=\vP_1$. The variety $\vH$ violates the identity $\alpha_1$ by~\cite[Lemma 3.6(i)]{Gusev-20} and satisfies the identities
$$
xyhxsytx\stackrel{\eqref{xyxztx=xyxzxtx}}\approx xyhxsxytx \stackrel{\eqref{xyzxy=yxzxy}}\approx yxhxsxytx\stackrel{\eqref{xyxztx=xyxzxtx}}\approx yxhxsytx,
$$
whence $\vH\vee\vQ=\vP_2$. Thus, the lattice $L(\vP_2)$ has the form shown in Fig.~\ref{pic: L(P)}. 

To complete the description of $L(\vP)$ it remains to verify that $\vP_n\subset\vP_{n+1}$ for any $n\ge1$. Clearly, $\vP_n\subseteq\vP_{n+1}$ for any $n\ge1$ because $\alpha_{n+1}$ follows from $\alpha_n$. So, it suffices to establish that if $\vP_{n+1}$ satisfies an identity $\wu =xy\prod_{i=1}^{n+1} (t_i \we_i^{k_i})\approx\wv$, then $\wv= xy\prod_{i=1}^{n+1} (t_i \we_i^{\ell_i})$ for some $\ell_1,\ell_2,\dots, \ell_{n+1}\ge1$. Put 
$$
\Psi=\{\eqref{xyx=xyxx},\,\eqref{xxyy=yyxx},\,\eqref{xyzxy=yxzxy},\,\alpha_{n+1},\,\beta_1,\,\gamma_1'\}.
$$
By induction, we can reduce our considerations to the case when $\{\wu,\wv\}=\{\wa\xi(\ws)\wb,\wa\xi(\wt)\wb\}$ for some words $\wa,\wb\in\sX^\ast$, an endomorphism $\xi$ of $\sX^\ast$ and $\ws\approx\wt\in\Psi$. We may assume without loss of generality that the words $\wu$ and $\wv$ are different.{\sloppy

}
\begin{observation}
\label{subwords of u}
Let $a$ and $b$ be different letters. If $ab$ is a subword of $\wu$, then this subword has exactly one occurrence in $\wu$ and $\wu$ does not contain the subword $ba$.\qed
\end{observation}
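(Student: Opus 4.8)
The plan is to read both assertions directly off the run structure of the explicit word $\wu=xy\prod_{i=1}^{n+1}(t_i\we_i^{k_i})$. Writing $\wu$ as the product of its maximal blocks of equal consecutive letters, and calling such a block a \emph{run}, the sequence of letters heading these runs is
$$
x,\ y,\ t_1,\ \we_1,\ t_2,\ \we_2,\ \dots,\ t_{n+1},\ \we_{n+1}.
$$
The first two runs are the single letters of the prefix $xy$, and thereafter the runs alternate between a divider $t_i$ and a non-empty block $\we_i^{k_i}$. Two features of this list will do all the work: the dividers $t_1,\dots,t_{n+1}$ are pairwise distinct, distinct from $x$ and $y$, and each occurs once; and the exponentiated letters alternate, $\we_{i-1}\ne\we_i$, because $\we_i=x$ for odd $i$ and $\we_i=y$ for even $i$.

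Since all letters inside one run coincide, any factor $ab$ of $\wu$ with $a\ne b$ must straddle a run boundary, so its occurrences correspond bijectively to the boundaries whose two heading letters are $a$ then $b$. The ordered pairs of adjacent run-letters read off from the list are exactly $(x,y)$, $(y,t_1)$, the pairs $(t_i,\we_i)$ for $1\le i\le n+1$, and the pairs $(\we_i,t_{i+1})$ for $1\le i\le n$. For the first assertion I would note that these are pairwise distinct: a pair with a divider first is some $(t_i,\we_i)$ and is determined by $i$; a pair with a divider second is $(y,t_1)$ or some $(\we_i,t_{i+1})$ and is determined by that second coordinate, the $t_j$ being distinct; and $(x,y)$ is the only divider-free pair. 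Hence no length-two factor with distinct letters repeats.

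For the second assertion I would rule out the reverse of each listed pair. The reverses $(y,x)$ and $(t_1,y)$ of the two initial pairs are dispatched at once, since in the list $y$ is never immediately followed by $x$ and $t_1$ is immediately followed by $\we_1=x$. The decisive point is the alternation: the reverse $(\we_i,t_i)$ of $(t_i,\we_i)$ is a letter-then-divider pair and could only match $(\we_{i-1},t_i)$, which is barred by $\we_{i-1}\ne\we_i$ (for $i=1$ it would have to be $(y,t_1)$, barred since $\we_1=x\ne y$); symmetrically, the reverse $(t_{i+1},\we_i)$ of $(\we_i,t_{i+1})$ could only match the pair $(t_{i+1},\we_{i+1})$, again barred by $\we_{i+1}\ne\we_i$. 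I expect no genuine obstacle, as the claim is a purely combinatorial feature of one explicit word; the only point needing care is that the two initial pairs $(x,y)$ and $(y,t_1)$ lie outside the uniform alternating pattern and so must be handled on their own.
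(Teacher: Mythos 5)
Your proof is correct, and it matches the paper in the only sense possible: the paper states this observation with a \qed{} and no argument, treating it as evident by inspection of the explicit word $\wu=xy\prod_{i=1}^{n+1}(t_i\we_i^{k_i})$, which is exactly the inspection you carry out. Your run-boundary formalization is sound and complete: every two-letter factor $ab$ with $a\ne b$ straddles a boundary between maximal runs, the boundary pairs are pairwise distinct since each involves one of the simple letters $t_i$ except for the lone prefix pair $(x,y)$, and no reversed pair can occur because of the alternation $\we_{i-1}\ne\we_i$ together with the special handling of $(x,y)$ and $(y,t_1)$.
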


If $\xi(x)$ is the empty word, then $\xi(\ws)=\xi(\wt)$, but this is impossible because $\wu\ne\wv$. Thus, $\xi(x)$ is non-empty. Then, since $\con(\xi(x))\subseteq\mul(\xi(\ws))\subseteq\mul(\wu)$, Observation~\ref{subwords of u} implies that $\xi(x)=c^k$ for some $k\ge1$ and $c\in\{x,y\}$.

The identity~\eqref{xyx=xyxx} allows us to add and delete the occurrences of the letter $x$ next to the non-first occurrence of this letter. This implies the required conclusion whenever $\ws\approx\wt$ coincides with~\eqref{xyx=xyxx}. Suppose now that $\ws\approx\wt\in\Psi\setminus\{\eqref{xyx=xyxx}\}$. Note that $\xi(y)$ is non-empty because the identity $\wu\approx\wv$ is non-trivial. Then $\xi(y)=d^r$ for some letter $d$ and some $r\ge1$ by Observation~\ref{subwords of u}. Since the words $\wu$ and $\wv$ are different, $c\ne d$. So, $\{c,d\}=\{x,y\}$. Now we apply Observation~\ref{subwords of u} again and obtain that $\ws\approx\wt$ does not equal to~\eqref{xyzxy=yxzxy}. Further, $\ws\approx\wt\notin\{\eqref{xxyy=yyxx},\,\beta_1,\,\gamma_1'\}$ because $\wu$ does not contain the subwords $c^{2k}d^r$, $d^rc^{2k}$ and $c^kd^rc^k$. So, it remains to consider the case when the identity $\ws\approx\wt$ coincides with the identity $\alpha_{n+1}$. Evidently, there is $j\in\{1,2,\dots,n+2\}$ such the $t_i\notin\con(\xi(t_j))$ for any $i\in\{1,2,\dots,n+1\}$. It follows that $\con(\xi(t_j))\subseteq\{x,y\}$. Then $\wu$ contains either at least two different occurrences of some subwords of the form $xy$ or $yx$. A contradiction with Observation~\ref{subwords of u}. 

So, we have proved that the lattice $L(\vP)$ has the form shown in Fig.~\ref{pic: L(P)}. This fact and Lemmas~\ref{L: LF+small=FG} and~\ref{L: xyx=xyxx is locally finite} imply that any proper subvariety of $\vP$ is Cross. Then $\vP$ is almost Cross. Finally, $\vP$ is non-finitely generated by Lemmas~\ref{L: LF+small=FG} and~\ref{L: xyx=xyxx is locally finite}.
\end{proof}

If $\ww\in\sX^\ast$ and $X\subseteq\sX$, then $\ww_X$ denote the word obtained from $\ww$ by deleting all letters from $X$.

\begin{lemma}
\label{L: does not contain P_n}
Let $k\ge1$ and $\vV$ be a monoid variety that satisfies the identity~\eqref{x^n=x^{n+1}} for some $n\ge1$. 
Suppose that $\vF\vee\vQ\subseteq\vV$.
Then $\vV$ satisfies the identity
$$
\delta_{k,n}:\enskip xy\prod_{i=1}^{k+1} (t_i \we_i^n)\approx yx\prod_{i=1}^{k+1} (t_i \we_i^n)
$$
if and only if $\vP_{k+1}\nsubseteq\vV$.
\end{lemma}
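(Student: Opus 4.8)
The plan is to establish the two implications separately, following the template of Lemmas~\ref{L: does not contain F} and~\ref{L: does not contain Q}. The forward implication is the easy one: I would show that $\vP_{k+1}$ itself fails $\delta_{k,n}$, so that no variety satisfying $\delta_{k,n}$ can contain $\vP_{k+1}$. For the converse I would start from an identity separating $\vV$ from $\vP_{k+1}$ and reduce it, with the help of the word problems for $\vF$ and $\vQ$ and of~\eqref{x^n=x^{n+1}}, to $\delta_{k,n}$.

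For the forward implication the key observation is that, modulo the identities of $\vP$, the identity $\delta_{k,n}$ is equivalent to $\alpha_k$. Indeed, $\vP$ satisfies~\eqref{xyx=xyxx} and, on setting the middle letter to the empty word, also $x^2\approx x^3$; and as recorded in the proof of Proposition~\ref{P: P is almost Cross}, identity~\eqref{xyx=xyxx} permits inserting or deleting occurrences of a letter adjacent to a non-first occurrence of that same letter. In each side $xy\prod_{i=1}^{k+1}(t_i\we_i^n)$ and $yx\prod_{i=1}^{k+1}(t_i\we_i^n)$, every letter of a block $\we_i^n$ is a non-first occurrence of $\we_i$, because the prefix already contains both $x$ and $y$; hence each exponent $n$ may be reduced to $1$ and conversely. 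Applying this to both sides converts $\delta_{k,n}$ into $\alpha_k$ and back, so $\vP_{k+1}\models\delta_{k,n}$ if and only if $\vP_{k+1}\models\alpha_k$. The latter fails: were $\vP_{k+1}$ to satisfy $\alpha_k$, it would satisfy $\alpha_k$ together with all identities of $\vP$, whence $\vP_{k+1}\subseteq\vP\{\alpha_k\}=\vP_k$, contradicting the strict inclusion $\vP_k\subset\vP_{k+1}$ proved in Proposition~\ref{P: P is almost Cross}. Thus $\vP_{k+1}\not\models\delta_{k,n}$; consequently, if $\vV$ satisfied $\delta_{k,n}$ and contained $\vP_{k+1}$, then $\vP_{k+1}$ would inherit $\delta_{k,n}$, which is absurd, so $\vP_{k+1}\nsubseteq\vV$.

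For the converse, suppose $\vP_{k+1}\nsubseteq\vV$ and fix an identity $\wu\approx\wv$ holding in $\vV$ but failing in $\vP_{k+1}$. Since $\vF\vee\vQ\subseteq\vV$ and $\vF$ is neither commutative nor completely regular, the same is true of $\vV$, so Lemma~\ref{L: equivalent decompositions} applies and the decompositions of $\wu$ and $\wv$ share the same dividers and total content. As $\wu\approx\wv$ also holds in $\vF$ and in $\vQ$, Lemma~\ref{L: word problem F} yields $\simple(\wu)=\simple(\wv)$, $\mul(\wu)=\mul(\wv)$ and $h_1(\wu,z)=h_1(\wv,z)$, $h_2(\wu,z)=h_2(\wv,z)$ for all $z\in\con(\wu)$, while Lemma~\ref{L: word problem Q} forces corresponding blocks to have equal content. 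Thus the discrepancy between $\wu$ and $\wv$ — the one that $\vP_{k+1}$ detects — lies in the relative order of occurrences of multiple letters. Since $k+1\ge2$, Figure~\ref{pic: L(P)} gives $\vF\vee\overleftarrow{\vE}\subseteq\vP_{k+1}\subseteq\vR$, so Lemma~\ref{L: R subvarieties} describes the identity basis of $\vP_{k+1}$; in particular, for the interchange of a single leading pair of letters, the weakest swap $\vP_{k+1}$ licenses is $\alpha_{k+1}$ (which needs at least $k+2$ alternately separated blocks), whereas the swap across only $k+1$ such blocks — namely $\alpha_k$, equivalently $\delta_{k,n}$ — is not licensed. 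I would then substitute the empty word for all letters except the two interchanged ones and a suitable choice of separating letters, obtaining a swap identity in these letters alone; together with~\eqref{x^n=x^{n+1}} this should deliver $\delta_{j,n}$ for some $j\le k$, and since $\delta_{j,n}$ implies $\delta_{k,n}$ (more separators yield a weaker identity, exactly as $\alpha_j$ implies $\alpha_{j+1}$), we would conclude that $\vV\models\delta_{k,n}$.

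The principal obstacle is this final reduction. One must argue that the difference between $\wu$ and $\wv$ truly localises to a single interchange of two multiple letters separated by at most $k+1$ blocks: with $k+2$ or more separators the swap would be an instance of $\alpha_{k+1}$ and the identity would hold in $\vP_{k+1}$, contradicting the choice of $\wu\approx\wv$. Equally delicate is the passage to exponent $n$: because $\vV$, unlike $\vP$, need not satisfy~\eqref{xyx=xyxx}, one cannot freely inflate or deflate powers, and only~\eqref{x^n=x^{n+1}} is available, so before collapsing one should first substitute powers of the interchanged letters to saturate the relevant blocks. Handling the separator count and the exponents at once is where the real work lies; it relies on the explicit basis of $\vP_{k+1}$ provided by Lemma~\ref{L: R subvarieties}. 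Should a more structural route be preferred, one may instead seek to present $\vP_{k+1}$ as $(\vF\vee\vQ)\vee\var S(\ww)$ for a suitable witness word $\ww$ and apply Lemma~\ref{L: isoterm}, whereupon the claim reduces to showing that $\ww$ is an isoterm for $\vV$ precisely when $\vV\not\models\delta_{k,n}$.
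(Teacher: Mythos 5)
Your forward implication is complete and is essentially the paper's own argument: both convert $\delta_{k,n}$ into $\alpha_k$ inside $\vP_{k+1}$ via~\eqref{xyx=xyxx}, so that $\vP_{k+1}\{\delta_{k,n}\}=\vP_{k+1}\{\alpha_k\}=\vP_k\subset\vP_{k+1}$, whence $\vP_{k+1}$ violates $\delta_{k,n}$ and no variety satisfying $\delta_{k,n}$ can contain it.

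The converse, however, contains a genuine gap, and you have in fact named it yourself: the assertion that the discrepancy between $\wu$ and $\wv$ ``localises to a single interchange of two multiple letters separated by at most $k+1$ alternating blocks'' \emph{is} the hard content of the lemma, and your sketch supplies no mechanism for proving it. Lemma~\ref{L: R subvarieties} cannot serve as that mechanism: it constrains the shape of a \emph{basis} of certain subvarieties of $\vR$ (and a basis of $\vP_{k+1}$ is known anyway, namely that of $\vP$ together with $\alpha_{k+1}$), but it does not solve the word problem of $\vP_{k+1}$, i.e., it gives no criterion for deciding which identities $\wu\approx\wv$ fail in $\vP_{k+1}$; moreover it applies only to subvarieties of $\vR$, whereas the ambient variety $\vV$ of the lemma need not lie in $\vR$. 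The paper closes precisely this gap with machinery absent from your proposal: (i) each block $\wu_i$ is normalized inside $\vP_{k+1}$ to $(\wu_i)_{Y_i}\wh_i$, keeping only its $1$-dividers (letters whose first two occurrences lie in different blocks) in order and appending squares of the remaining letters, which is legitimate because $\vF\vee\vQ\subseteq\vV$ and Lemmas~\ref{L: word problem F} and~\ref{L: word problem Q} force these data to agree for $\wu$ and $\wv$; (ii) one interpolates between the normal forms of $\wu$ and $\wv$ one block at a time, pinning the failure in $\vP_{k+1}$ to a single adjacent transposition $xy\to yx$ of $1$-dividers in one block; (iii) one shows $x$ and $y$ cannot co-occur in any later block, since otherwise~\eqref{xyzxy=yxzxy} together with~\eqref{xyx=xyxx} and~\eqref{xxyy=yyxx} would make the offending identity hold in $\vP_{k+1}$, so the later occurrences of $x,y$ fall into $r-1$ alternating groups with $r\le k+1$ (else the identity follows from $\alpha_{k+1}$, a contradiction); and (iv) one substitutes $t_i\we_i^n$ for the divider opening each group and $1$ for all other letters except $x,y$ in $\wu\approx\wv$ itself, which also resolves your worry about exponents: the resulting powers all exceed $n$ and are collapsed to exactly $n$ by~\eqref{x^n=x^{n+1}} alone, with no appeal to~\eqref{xyx=xyxx} in $\vV$. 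This produces $\delta_{r-2,n}$ in $\vV$, hence $\delta_{k,n}$. Your alternative route via $\vP_{k+1}=(\vF\vee\vQ)\vee\var S(\ww)$ and Lemma~\ref{L: isoterm} is likewise undeveloped and is not the paper's argument.
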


\begin{proof}
For any $n\ge1$, $\vP_{k+1}\{\delta_{k,n}\}=\vP_{k+1}\{\alpha_k\}=\vP_k$ because
$$
xy\prod_{i=1}^{k+1} (t_i \we_i)\stackrel{\eqref{xyx=xyxx}}\approx xy\prod_{i=1}^{k+1} (t_i \we_i^n)\stackrel{\delta_{k,n}}\approx yx\prod_{i=1}^{k+1} (t_i \we_i^n)\stackrel{\eqref{xyx=xyxx}}\approx yx\prod_{i=1}^{k+1} (t_i \we_i).
$$
This fact and Proposition~\ref{P: P is almost Cross} imply that if $\vV$ satisfies $\delta_{k,n}$, then $\vP_{k+1}\nsubseteq\vV$.

Conversely, suppose that $\vP_{k+1}\nsubseteq\vV$. Then there exists an identity $\wu\approx\wv$ that is satisfied by the variety $\vV$ but not by the variety $\vP_{k+1}$. The variety $\vV$ is non-completely regular and non-commutative because $\vF\vee\vQ\subseteq\vV$. Let~\eqref{decomposition of u} be the decomposition of $\wu$. Lemma~\ref{L: equivalent decompositions} implies that the decomposition of $\wv$ has the form~\eqref{decomposition of v}. 

As in~\cite{Gusev-Vernikov-18}, a letter $x$ is said to be 1-\textit{divider of a word} $\ww$ if the first and the second occurrences of this letter in $\ww$ lie in different blocks. Equivalently, $x$ is a 1-divider of a word $\ww$ if $h_1(\ww,x)\ne h_2(\ww,x)$. For any $0\le i\le m$, let $X_i$ denote the set of all 1-dividers of $\wu$ in the block $\wu_i$ and $Y_i=\con(\wu_i)\setminus X_i$. The inclusion $\vF\vee\vQ\subseteq\vV$ and Lemmas~\ref{L: word problem F} and~\ref{L: word problem Q} imply that $X_i$ coincides with the set of all 1-dividers of $\wv$ in the block $\wv_i$ and $Y_i=\con(\wv_i)\setminus X_i$. Let $Y_i=\{y_{i1},y_{i2},\dots,y_{ir_i}\}$. It is routinely checked that $\vP_{k+1}$ satisfies $\wu\approx\prod_{i=0}^m(t_i\wu_i')$ and $\wv\approx\prod_{i=0}^m(t_i\wv_i')$ where $\wu_i'=(\wu_i)_{Y_i}\wh_i$,  $\wv_i'=(\wv_i)_{Y_i}\wh_i$ and $\wh_i=y_{i1}^2y_{i2}^2\cdots y_{ir_i}^2$.

For any $s=0,1,\dots,m+1$, we put $\ww_s=\prod_{i=0}^{s-1}(t_i\wv_i')\prod_{i=s}^m(t_i\wu_i')$. Since $\wu\approx\wv$ does not hold in $\vP_{k+1}$, there is $j\in\{0,1,\dots,m\}$ such that $\vP_{k+1}$ violates $\ww_j\approx\ww_{j+1}$. 
Clearly, all the letters of both $(\wu_j)_{Y_j}$ and $(\wv_j)_{Y_j}$ are simple in these words. By induction, we may assume that there are $x,y$ such that the first occurrence of $x$ precedes the first occurrence of $y$ in $\wu_j$ but $(\wv_j)_{Y_j}=\wa yx\wb$ and $\vP_{k+1}$ violates 
\begin{equation}
\label{identity that does not hold in P_{k+1}}
\ww_{j+1}\approx \Bigl(\prod_{i=0}^{j-1}(t_i\wv_i')\Bigr)\bigl(t_j\wa xy\wb\wh_j\bigr)\Bigl(\prod_{i=j+1}^m(t_i\wu_i')\Bigr).
\end{equation}

If $x,y\in\con(\wu_\ell')$ for some $\ell>j$, then we obtain a contradiction with the fact that~\eqref{identity that does not hold in P_{k+1}} is not satisfied by $\vP_{k+1}$ because
$$
\begin{aligned}
&\ww_{j+1}\stackrel{\{\eqref{xyx=xyxx},\eqref{xxyy=yyxx}\}}\approx\Bigl(\prod_{i=0}^j(t_i\wv_i')\Bigr)\Bigl(\prod_{i=j+1}^{\ell-1}(t_i\wu_i')\Bigr)\bigl(t_\ell\wu_\ell'xy\bigr)\Bigl(\prod_{i=\ell+1}^m(t_i\wu_i')\Bigr)\stackrel{\eqref{xyzxy=yxzxy}}\approx\\
&\Bigl(\prod_{i=0}^{j-1}(t_i\wv_i')\Bigr)\bigl(t_j\wa xy\wb\wh_j\bigr)\Bigl(\prod_{i=j+1}^{\ell-1}(t_i\wu_i')\Bigr)\bigl(t_\ell\wu_\ell'xy\bigr)\Bigl(\prod_{i=\ell+1}^m(t_i\wu_i')\Bigr)\stackrel{\{\eqref{xyx=xyxx},\eqref{xxyy=yyxx}\}}\approx \\
&\Bigl(\prod_{i=0}^{j-1}(t_i\wv_i')\Bigr)\bigl(t_j\wa xy\wb\wh_j\bigr)\Bigl(\prod_{i=j+1}^m(t_i\wu_i')\Bigr).
\end{aligned}
$$
So, it remains to consider the case when $|\con(\wu_\ell')\cap \{x,y\}|\le 1$ for any $\ell>j$. Then we may assume without any loss that there exists a subsequence $k_1,k_2,\dots, k_r=m+1$ of $j+1,j+2,\dots,m+1$ such that
\begin{itemize}
\item $\con(\wu_{j+1}\wu_{j+2}\cdots\wu_{k_1-1})\cap \{x,y\}=\emptyset$;\\
\item if $s\le r-1$, then $\con(\wu_{k_s}\wu_{k_s+1}\cdots\wu_{k_{s+1}-1})\cap \{x,y\}=\con(\we_s)$.
\end{itemize}
Clearly, $r>2$ because $x,y\in\con(\prod_{i=j+1}^m\mathbf u_i')=\con(\prod_{i=j+1}^m\mathbf v_i')$. If $r>k+1$, then we get a contradiction with the fact that~\eqref{identity that does not hold in P_{k+1}} is not satisfied by $\vP_{k+1}$ because this identity follows from $\alpha_k$. Now, for any $i=1,2,\dots,r-1$, we substitute $t_i\we_i^n$ for $t_{k_i}$ and~1 for all the other letters occurring in the identity $\wu\approx \wv$ except $x$ and $y$. We obtain the identity
$$
xy\prod_{i=1}^{r-1}(t_i\we_i^{p_i})\approx yx\prod_{i=1}^{r-1}(t_i\we_i^{q_i})
$$
for some $p_1,q_1,p_2,q_2,\dots,p_{r-1},q_{r-1}>n$. Then, since~\eqref{x^n=x^{n+1}} holds in $\vV$, we obtain that $\vV$ satisfies $\delta_{r-2,n}$.
Thus, we may assume that $r\le k+1$. It remains to note that $\delta_{r-2,n}$ implies $\delta_{k,n}$.
\end{proof}

\section{The variety $\vO$}
\label{sec: var O}
Recall that $\vO=\var\{\eqref{xtyzxy=xtyzyx},\,\eqref{xtxyzy=xtyxzy}\}$. Consider an identity of the form~\eqref{u_0t_0...t_mu_m=v_0t_0...t_mv_m} where $\simple(\wu)=\simple(\wv)=\{t_1,t_2,\dots,t_m\}$ and $\wu_0, \wv_0, \wu_1,\wv_1,\dots, \wu_m, \wv_m$ are words not containing any of the letters $t_1,t_2,\dots, t_m$. If $(\wu_0, \wv_0),(\wu_1, \wv_1),\dots,(\wu_m, \wv_m)\ne (\emptyset,\emptyset)$, then this identity is said to be \textit{efficient}. For any $i\ge2$, the word $\ww$ is said to be $i$-\textit{free} if, for any $\wa_1, \wa_2, \wa_3\in\sX^\ast$, the equality $\ww=\wa_1(\wa_2)^i\wa_3$ implies that $\wa_2$ is the empty word.  For any variety $\vV$ and any identity system $\Sigma$, we put $\vV\{\Sigma\}=\vV\wedge \var\,\Sigma$.  Let
$$
\vL=\vO\{x^2\approx x^3,\, x^2y\approx yx^2,\,\alpha_1\}.
$$

The main result of this section is the following description of aperiodic almost Cross subvarieties of $\vO$.

\begin{proposition}
\label{P: aperiodic almost Cross in O}
The varieties $\vL$, $\vP$ and only they are aperiodic almost Cross subvarieties of $\vO$.
\end{proposition}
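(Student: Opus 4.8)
The plan is to prove the statement in two directions. For the fact that $\vL$ and $\vP$ do qualify, note that $\vL\subseteq\vO$ holds by definition and that $\vL$ is almost Cross by Lee's work~\cite{Lee-13}, while $\vP$ is almost Cross by Proposition~\ref{P: P is almost Cross}; it then remains to verify $\vP\subseteq\vO$, i.e. to deduce~\eqref{xtyzxy=xtyzyx} and~\eqref{xtxyzy=xtyxzy} from the basis of $\vP$, which is a routine computation using~\eqref{xyx=xyxx}, \eqref{xxyy=yyxx} and~\eqref{xyzxy=yxzxy}. Both varieties are aperiodic: $\vL$ satisfies $x^2\approx x^3$, while $\vP$ satisfies $x^3\approx x^4$ by putting $y=x$ in~\eqref{xyx=xyxx}.

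For the converse I use that an almost Cross variety is a minimal non-Cross variety. Since $\vL$ and $\vP$ are non-Cross, it suffices to prove the implication: if $\vV\subseteq\vO$ is aperiodic with $\vL\nsubseteq\vV$ and $\vP\nsubseteq\vV$, then $\vV$ is Cross. Indeed, granting this, an aperiodic almost Cross $\vV\subseteq\vO$ cannot avoid both $\vL$ and $\vP$, hence contains one of them, and minimality (every proper subvariety of $\vV$ is Cross, whereas $\vL$ and $\vP$ are not) forces $\vV=\vL$ or $\vV=\vP$. So fix $\vV$ with $\vL,\vP\nsubseteq\vV$. By Observations~\ref{O: cr in Acom} and~\ref{O: cr and comm are Cross} we may assume $\vV$ is neither completely regular nor commutative, whence $\vV$ satisfies~\eqref{x^n=x^{n+1}} for some least $n\ge2$. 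Every subvariety of $\vO$ is finitely based~\cite{Lee-12}, so $\vV$ is finitely based; hence it is enough to show that $\vV$ is locally finite and small, since then $\vV$ is finitely generated by Lemma~\ref{L: LF+small=FG} and therefore Cross.

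The argument now splits on whether $\vQ\subseteq\vV$. Assume first $\vQ\subseteq\vV$; this is the region of $\vP$, and the exclusion $\vP\nsubseteq\vV$ is the one in play. If $\vF\nsubseteq\vV$, then Lemma~\ref{L: does not contain F} gives that $\vV$ satisfies~\eqref{xyx^n=x^nyx^n}, and a short deduction from this together with~\eqref{x^n=x^{n+1}} and the identities of $\vO$ shows $\vV\subseteq\vQ$, whence $\vV=\vQ$, which is small and locally finite. If $\vF\subseteq\vV$, then $\vP_1=\vF\vee\vQ\subseteq\vV$. Here the plan is first to show $\vV\subseteq\vR$, deriving~\eqref{xxyy=yyxx} and~\eqref{xzxyxty=xzyxty} for $\vV$ from~\eqref{x^n=x^{n+1}} and the identities of $\vO$, so that Lemma~\ref{L: R subvarieties} applies and describes the identities that may fail in $\vV$ through the families $\alpha_k$, $\beta_k$, $\gamma_k$, $\gamma_k'$. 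Since $\vP\nsubseteq\vV$, the set $\{k\ge1:\vP_k\subseteq\vV\}$ is an initial segment of the positive integers and is bounded; let $k_0$ be its maximum. Then $\vP_{k_0+1}\nsubseteq\vV$, so $\vV$ satisfies $\delta_{k_0,n}$ by Lemma~\ref{L: does not contain P_n}, and combining this with the description from Lemma~\ref{L: R subvarieties} yields $\vV\subseteq\vP_{k_0}$. As $\vP_{k_0}$ is small and locally finite, so is $\vV$.

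It remains to treat the case $\vQ\nsubseteq\vV$, which is where $\vL$ lives and which is the crux of the proof; note that here $\vP\nsubseteq\vV$ is automatic, since $\vQ\subseteq\vP$. Lemma~\ref{L: does not contain Q} gives that $\vV$ satisfies~\eqref{x^nyzx^n=x^nyxzx^n}; after checking the instances of $\kappa_{n,j}$ needed to invoke Corollary~\ref{C: w is equivalent to (2n+2)-limited word}, every word becomes $\vV$-equivalent to a $(2n+2)$-limited one, which already yields local finiteness. The remaining, and hardest, task is to show that $\vV$ is small. The reordering identity~\eqref{x^nyzx^n=x^nyxzx^n} drives $\vV$ into the realm of monoids with central idempotents, and my plan is to prove that $\vV\subseteq\Acen$. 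Granting this, if $\vV$ were not small it would contain an almost Cross subvariety of $\Acen$ by the Zorn argument of the introduction; by Lee's classification~\cite{Lee-13} this subvariety is $\vL$, $\vM$ or $\vN$, and since $\vM,\vN\nsubseteq\vO\supseteq\vV$ it must be $\vL$, contradicting $\vL\nsubseteq\vV$. Hence $\vV$ is small, and the proof is complete in every case. The main obstacle is precisely this last step: establishing $\vV\subseteq\Acen$ (equivalently, deducing the central-idempotent identity $x^ny\approx yx^n$ from~\eqref{x^nyzx^n=x^nyxzx^n} and the identities of $\vO$), so that the central-idempotent region can be controlled and $\vL$ separated from the small varieties surrounding it.
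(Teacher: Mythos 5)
Your reduction of the statement to the implication ``every aperiodic subvariety $\vV$ of $\vO$ with $\vL\nsubseteq\vV$ and $\vP\nsubseteq\vV$ is Cross'', and your verification that $\vL$ and $\vP$ are themselves aperiodic almost Cross subvarieties of $\vO$, are correct and agree with the paper. The proof of the implication, however, breaks in all three of your cases, for one structural reason: an aperiodic subvariety of $\vO$ only satisfies \eqref{x^n=x^{n+1}} for some possibly large $n$, so it cannot in general be squeezed below the index-two varieties $\vQ$, $\vR$, $\vP_k$, nor into the class $\Acen$ --- and each of your cases attempts exactly such a squeeze.

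The step you yourself flag as the main obstacle is not merely unproven; it is false. Take $\vV=\vF$. Then $\vF\subseteq\vP\subseteq\vO$, $\vF$ is aperiodic, it excludes $\vL$ and $\vP$ (being a proper subvariety of the almost Cross variety $\vP$, it is Cross), and $\vQ\nsubseteq\vF$, since the identity $xyx^2\approx x^2yx^2$ from the basis of $\vQ$ fails in $\vF$ by Lemma~\ref{L: word problem F} (the values $h_2({\cdot}\,,x)$ differ). So $\vF$ falls under your case $\vQ\nsubseteq\vV$ and, in accordance with Lemma~\ref{L: does not contain Q}, satisfies \eqref{x^nyzx^n=x^nyxzx^n}. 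Yet $\vF$ violates $x^2y\approx yx^2$ (the values $h_1({\cdot}\,,x)$ differ), and since $\vF$ satisfies $x^2\approx x^3$, it violates $x^ny\approx yx^n$ for every $n$; hence $\vF\nsubseteq\Acen$. The variety $\vH$ behaves the same way. Thus the region $\vQ\nsubseteq\vV$ contains varieties far outside $\Acen$, the planned derivation of centrality from \eqref{x^nyzx^n=x^nyxzx^n} and the identities of $\vO$ cannot exist, and Lee's classification of the almost Cross subvarieties of $\Acen$~\cite{Lee-13} is unavailable exactly where your argument needs it.

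The other two cases fail on the index issue. Put $\vZ=\var\{x^3\approx x^4,\ xy\approx yx\}$; being commutative, $\vZ\subseteq\vO$. The join $\vQ\vee\vZ$ contains $\vQ$, excludes $\vF$ (the identity $xyx^2\approx x^2yx^2$ holds in both $\vQ$ and $\vZ$), excludes $\vP$ (the identity $\alpha_1$ holds in both $\vQ$ and $\vZ$ but fails in $\vP$ by Proposition~\ref{P: P is almost Cross}) and excludes $\vL$ (the identity $xyxzx\approx xyx^2zx$ holds in both $\vQ$ and $\vZ$, while $xyxzx$ is an isoterm for $\vL$: it contains no square and only one repeated letter, so no defining identity of $\vL$ applies to it nontrivially); but $\vQ\vee\vZ\nsubseteq\vQ$, since it violates $x^2\approx x^3$. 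So in your first case the conclusion $\vV\subseteq\vQ$ is simply wrong, and no deduction of the identities of $\vQ$ from \eqref{xyx^n=x^nyx^n}, \eqref{x^n=x^{n+1}} and the identities of $\vO$ can exist. Likewise $\vP_1\vee\vZ$ satisfies every hypothesis of your second case (note that $xyxzx\approx xyx^2zx$ holds in $\vP_1$ by \eqref{xyx=xyxx}, so $\vL$ is again excluded), yet it violates \eqref{xzxyxty=xzyxty}, so it lies neither in $\vR$ --- making Lemma~\ref{L: R subvarieties} inapplicable --- nor in any $\vP_k$. This is precisely why the paper takes a different route: it never tries to place $\vV$ under a known small variety, but proves smallness directly, showing by a case analysis on the normal forms of identities, with the compression tools Observation~\ref{O: equivalent identities}, Corollary~\ref{C: w is equivalent to (2n+2)-limited word} and Lemmas~\ref{L: does not contain F}, \ref{L: does not contain Q}, \ref{L: does not contain P_n}, that every identity can be replaced, modulo $\vV$, by one whose sides have length at most $50n^2$; it then invokes \cite[Lemma~7]{Lee-13}, which says that every small subvariety of $\vO$ is Cross, and this settles local finiteness and the finite basis question in the same stroke.
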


\begin{proof}
Let $\vV$ be an aperiodic subvariety of $\vO$ that does not contain $\vL$ and $\vP$. We need to verify that $\vV$ is a Cross variety. In view of Observation~\ref{O: cr and comm are Cross}, we may assume that $\vV$ is non-commutative and non-completely regular. Since $\vV$ is aperiodic, it satisfies the identity $x^s\approx x^{s+1}$ for some $s\ge1$. According to~\cite[Lemma~4.4]{Lee-14}, $\vV$ satisfies $\kappa_{s',j}$ for some $0\le j\le s'$. If we multiply the identity $\kappa_{s',j}$ by $t_{s'+1}x$ on the left, then we obtain the identity $\kappa_{s'+1,j}$. Evidently, the identity $x^s\approx x^{s+1}$ implies the identity $x^{s+1}\approx x^{s+2}$. It follows that we may assume without any loss that $\vV$ satisfies the identities~\eqref{x^n=x^{n+1}} and $\kappa_{n,j}$ for some $n\ge1$ and $0\le j\le n$. 

It is proved in~\cite[Lemma~7]{Lee-13} that any small subvariety of $\vO$ is Cross. In view of this fact, it suffices to verify that $\vV$ is small. Let $\wu\approx\wv$ be an identity that does not hold in $\vV$. We verify that there is an identity $\wu'\approx\wv'$ such that $\vV\{\wu\approx\wv\}=\vV\{\wu'\approx\wv'\}$ and $\ell(\wu'),\ell(\wv')\le 50n^2$. Then any subvariety of $\vV$ can be given by identities from some fixed finite system of identities. The proof of the fact that $\vV$ is small and so the proof of Proposition~\ref{P: aperiodic almost Cross in O} are thus complete.

The identity~\eqref{x^n=x^{n+1}} allows us to assume that $\wu$ and $\wv$ are $(n+1)$-free. In view of~\cite[Lemma~8 and Remark~11]{Lee-13}, we may assume without any loss that one of the following claims holds:
\begin{itemize}
\item[(a)] $\wu \approx \wv$ coincides with the efficient identity
\begin{equation}
\label{rigid identity} 
x^{e_0} \prod_{i=1}^m (t_ix^{e_i}) \approx x^{f_0} \prod_{i=1}^m (t_ix^{f_i}) \end{equation}
for some $m\ge0$, $0\le e_0,f_0,e_1,f_1,\dots,e_m,f_m\le n$;
\item[(b)] $\wu \approx \wv$ coincides with an efficient identity 
\begin{equation}
\label{x^ey^f..=y^ex^f..}
x^{e_0}y^{f_0}\prod_{i=1}^m(t_i x^{e_i}y^{f_i})\approx y^{f_0}x^{e_0}\prod_{i=1}^m(t_i x^{e_i}y^{f_i})
\end{equation} 
for some $m\ge0$, $e_0,f_0\ge1$, $0\le e_1,f_1,e_2,f_2,\dots,e_m,f_m\le n$ and $\sum_{i=0}^m e_i,\sum_{i=0}^m f_i\ge2$.{\sloppy

}
\end{itemize}
Further considerations are divided into two cases corresponding to the claims~(a) and~(b).

\medskip

\textit{The claim}~(a) \textit{holds}. Suppose that $\vQ\nsubseteq\vV$. Lemma~\ref{L: equivalent decompositions} and Corollary~\ref{C: w is equivalent to (2n+2)-limited word} imply that there are $(2n+2)$-limited words $\wu_1=x^{e_0'}\prod_{i=1}^m(t_ix^{e_i'})$ and $\wv_1=x^{f_0'}\prod_{i=1}^m(t_ix^{f_i'})$ such that $\wu\approx\wu_1$ and $\wv\approx\wv_1$ hold in $\vV$. Let $X=\{t_i\mid 1\le i\le m \text{ and } e_i'=f_i'=0\}$. Put $\wu'=(\wu_1)_X$ and $\wv'=(\wv_1)_X$. Clearly, the identity $\wu'\approx\wv'$ is equivalent to $\wu\approx\wv$ in $\vV$. Since $\wu'\approx\wv'$ is efficient and $\wu'$, $\wv'$ are $(2n+2)$-limited, both $\wu'$ and $\wv'$ have at most $(4n+3)$ simple letters. Hence $\ell(\wu'),\ell(\wv')\le 6n+5<50n^2$. So, we may assume below that $\vQ\subseteq\vV$.

Since the identity $\wu\approx\wv$ is efficient, Lemma~\ref{L: word problem Q} implies that $e_i,f_i>0$ for any $i=0,1,\dots,m$. If $m\le2n$, then $\ell(\wu),\ell(\wv)\le n+2n(n+1)<50n^2$ because $\wu$ and $\wv$ are $(n+1)$-free. Therefore, we may assume without any loss that $m>2n$. 
Then $\vV$ satisfies the identities $\wu\stackrel{\kappa_{n,j}}\approx\wu_1$ and $\wv\stackrel{\kappa_{n,j}}\approx\wv_1$ where
$$
\begin{aligned}
\wu_1={}&x^{e_0}\Bigl(\prod_{i=1}^{n-1}(t_ix^{e_i})\Bigr)\Bigl(\prod_{i=n}^{m-n}(t_ix^n)\Bigr)\Bigl(\prod_{i=m-n+1}^m(t_ix^{e_i})\Bigr),\\
\wv_1={}&x^{f_0}\Bigl(\prod_{i=1}^{n-1}(t_ix^{f_i})\Bigr)\Bigl(\prod_{i=n}^{m-n}(t_ix^n)\Bigr)\Bigl(\prod_{i=m-n+1}^m(t_ix^{f_i})\Bigr).
\end{aligned}
$$

Now Observation~\ref{O: equivalent identities} applies with the conclusion that $\wu_1\approx\wv_1$ is equivalent to $\wu'\approx\wv'$ in $\vV$ where
$$
\begin{aligned}
\wu'={}&x^{e_0}\Bigl(\prod_{i=1}^{n-1}(t_ix^{e_i})\Bigr)\bigl(t_nx^n\bigr)\Bigl(\prod_{i=m-n+1}^m(t_ix^{e_i})\Bigr),\\
\wv'={}&x^{f_0}\Bigl(\prod_{i=1}^{n-1}(t_ix^{f_i})\Bigr)\bigl(t_nx^n\bigr)\Bigl(\prod_{i=m-n+1}^m(t_ix^{f_i})\Bigr).
\end{aligned}
$$
Then $\vV\{\wu\approx\wv\}=\vV\{\wu_1\approx\wv_1\}=\vV\{\wu'\approx\wv'\}$.
Since $\wu'$ and $\wv'$ are $(n+1)$-free, 
$$
\ell(\wu'),\ell(\wv')\le n+(n^2-1)+(n+1)+n(n+1)<50n^2,
$$
and we are done.

\medskip

\textit{The claim}~(b) \textit{holds}. 
Suppose that $\vQ\nsubseteq\vV$. Lemma~\ref{L: equivalent decompositions}, Corollary~\ref{C: w is equivalent to (2n+2)-limited word} and its proof imply that there are $(2n+2)$-limited words $\wu_1=x^{e_0'}y^{f_0'}\prod_{i=1}^m(t_ix^{e_i'}y^{f_i'})$ and $\wv_1=y^{f_0'}x^{e_0'}\prod_{i=1}^m(t_ix^{e_i'}y^{f_i'})$ such that $\wu\approx\wu_1$ and $\wv\approx\wv_1$ hold in $\vV$. Let $X=\{t_i\mid 1\le i\le m \text{ and } e_i'=f_i'=0\}$. Put $\wu'=(\wu_1)_X$ and $\wv'=(\wv_1)_X$. Clearly, the identity $\wu'\approx\wv'$ is equivalent to $\wu\approx\wv$ in $\vV$. Since $\wu'\approx\wv'$ is efficient and $\wu'$, $\wv'$ are $(2n+2)$-limited, both $\wu'$ and $\wv'$ have at most $(4n+2)$ simple letters. Hence $\ell(\wu'),\ell(\wv')\le 8n+6<50n^2$. So, we may assume below that $\vQ\subseteq\vV$. Then $(e_i,f_i)\ne(0,0)$ for any $i=0,1,\dots,m$ because the identity $\wu\approx\wv$ is efficient. If $m\le8n$, then $\ell(\wu),\ell(\wv)\le 2n+8n(2n+1)<50n^2$ because $\wu$ and $\wv$ are $(n+1)$-free. Therefore, we may assume without any loss that $m>8n$. Then either $e=\sum_{i=0}^me_i>2n$ or $f=\sum_{i=0}^mf_i>2n$ because $(e_i,f_i)\ne(0,0)$ for any $i=0,1,\dots,m$. By symmetry, we may assume that $e>2n$. Further considerations are divided into two cases: $f\le 2n$ or $f>2n$.

\smallskip

\textit{Case} 1: $f\le 2n$. There are $\ell\le 2n$ and a maximal subsequence $k_1,k_2,\dots, k_\ell$ of $4n,2n+1,\dots,m-4n$ such that $f_{k_1},f_{k_2},\dots, f_{k_{\ell}}>0$. For convenience, we put $k_{\ell+1}=m-4n+1$. Then $\wu=x^{e_0}y^{f_0}\ww$ and $\wv=y^{f_0}x^{e_0}\ww$ where
$$
\ww=\prod_{i=1}^{4n-1}(t_ix^{e_i}y^{f_i})\prod_{i=4n}^{k_1-1}(t_ix^{e_i})\prod_{s=1}^{\ell}\bigl((t_{k_s}x^{e_{k_s}}y^{f_{k_s}})\prod_{i=k_s+1}^{k_{s+1}-1}(t_ix^{e_i})\bigr)\prod_{i=k_{\ell+1}}^m(t_ix^{e_i}y^{f_i}).
$$
Then $\prod_{i=0}^{4n-1}e_i,\prod_{i=m-4n+1}^me_i>n$ because the identity $\wu\approx\wv$ is efficient and $f\le 2n$.  It follows that $\vV$ satisfies the identities $\wu\stackrel{\kappa_{n,j}}\approx\wu_1$ and $\wv\stackrel{\kappa_{n,j}}\approx\wv_1$ where $\wu_1=x^{e_0}y^{f_0}\ww_1$, $\wv_1=y^{f_0}x^{e_0}\ww_1$ and
$$
\ww_1=\prod_{i=1}^{4n-1}(t_ix^{e_i}y^{f_i})\prod_{i=4n}^{k_1-1}(t_ix^n)\prod_{s=1}^{\ell}\bigl((t_{k_s}x^{e_{k_s}}y^{f_{k_s}})\prod_{i=k_s+1}^{k_{s+1}-1}(t_ix^n)\bigr)\!\prod_{i=k_{\ell+1}}^m(t_ix^{e_i}y^{f_i}).
$$

Now Observation~\ref{O: equivalent identities} applies with the conclusion that $\wu_1\approx\wv_1$ is equivalent to $\wu'\approx\wv'$ in $\vV$ where $\wu'=x^{e_0}y^{f_0}\ww'$, $\wv'=y^{f_0}x^{e_0}\ww'$ and
$$
\ww'=\Bigl(\prod_{i=1}^{4n-1}(t_ix^{e_i}y^{f_i})\Bigr)\bigl(t_{4n}x^n\bigr)\Bigl(\prod_{s=1}^{\ell}\bigl((t_{k_s}x^{e_{k_s}}y^{f_{k_s}})(t_{k_s+1}x^n)\bigr)\Bigr)\Bigl(\prod_{i=m-4n+1}^m(t_ix^{e_i}y^{f_i})\Bigr).
$$
Then $\vV\{\wu\approx\wv\}=\vV\{\wu_1\approx\wv_1\}=\vV\{\wu'\approx\wv'\}$. Since $\wu'$ and $\wv'$ are $(n+1)$-free and $\ell\le 2n$, 
$$
\ell(\wu'),\ell(\wv')\le 2n+(4n-1)(2n+1)+(n+1)+2n(3n+2)+4n(2n+1)\le 50n^2,
$$
and we are done.
\smallskip

\textit{Case} 2: $f>2n$. Then $\vV\{\wu\approx\wv\}$ satisfies the identity
\begin{equation}
\label{x^{e_0}y^{f_0}tx^ny^n=y^{f_0}x^{e_0}tx^ny^n}
x^{e_0}y^{f_0}tx^ny^n\approx y^{f_0}x^{e_0}tx^ny^n
\end{equation}
because
$$
\begin{aligned}
&x^{e_0}y^{f_0}tx^ny^n\stackrel{\eqref{x^n=x^{n+1}}}\approx x^{e_0}y^{f_0}tx^{n+e}y^{n+f}\stackrel{\eqref{xtyzxy=xtyzyx}}\approx x^{e_0}y^{f_0}tx^ny^n\prod_{i=1}^m(x^{e_i}y^{f_i})\stackrel{\eqref{x^ey^f..=y^ex^f..}}\approx\\
&y^{f_0}x^{e_0}tx^ny^n\prod_{i=1}^m(x^{e_i}y^{f_i})\stackrel{\eqref{xtyzxy=xtyzyx}}\approx y^{f_0}x^{e_0}tx^{n+e}y^{n+f}\stackrel{\eqref{x^n=x^{n+1}}}\approx y^{f_0}x^{e_0}tx^ny^n.
\end{aligned}
$$

Suppose that $\vF\nsubseteq\vV$. Then $\vV$ satisfies~\eqref{xyx^n=x^nyx^n} by Lemma~\ref{L: does not contain F}. Since $e,f>2n$, there are $c$ and $d$ such that $e_c,f_d>0$ and $n\le\sum_{i=1}^ce_i, \sum_{i=c}^me_i,\sum_{i=1}^df_i, \sum_{i=d}^mf_i$. By symmetry, we may assume that $c\le d$. Then $\vV\{\wu\approx\wv\}=\vV\{\eqref{xyx^n=x^nyx^n},\,\eqref{x^{e_0}y^{f_0}tx^ny^n=y^{f_0}x^{e_0}tx^ny^n}\}$ because
$$
\begin{aligned}
&\wu\stackrel{\kappa_{n,j}}\approx x^{e_0}y^{f_0}\ww\stackrel{\eqref{xyx^n=x^nyx^n}}\approx
x^{n-1+e_0}y^{n-1+f_0}\ww\stackrel{\eqref{x^n=x^{n+1}}}\approx
x^{n+e_0}y^{n+f_0}\ww\stackrel{\eqref{xtxyzy=xtyxzy}}\approx
x^{e_0}y^{f_0}x^ny^n\ww\stackrel{\eqref{x^{e_0}y^{f_0}tx^ny^n=y^{f_0}x^{e_0}tx^ny^n}}\approx\\
&y^{f_0}x^{e_0}x^ny^n\ww\stackrel{\eqref{xtxyzy=xtyxzy}}\approx
y^{n+f_0}x^{n+e_0}\ww\stackrel{\eqref{x^n=x^{n+1}}}\approx
y^{n-1+f_0}x^{n-1+e_0}\ww\stackrel{\eqref{xyx^n=x^nyx^n}}\approx
y^{f_0}x^{e_0}\ww\stackrel{\kappa_{n,j}}\approx
\wv,
\end{aligned}
$$
where
$$
\ww=\Bigl(\prod_{i=1}^{c-1}(t_ix^{e_i}y^{f_i})\Bigr)\bigl(t_cx^ny^{f_c}\bigr)\Bigl(\prod_{i=c+1}^{d-1}(t_ix^{e_i}y^{f_i})\Bigr)\bigl(t_dx^{e_d}y^n\bigr)\Bigl(\prod_{i=d+1}^m(t_ix^{e_i}y^{f_i})\Bigr),
$$
and we are done. So, we may assume below that $\vF\subseteq\vV$. Then $\vV$ satisfies $\delta_{s,s'}$ for some $s,s'\ge1$ by Proposition~\ref{P: P is almost Cross}, Lemma~\ref{L: does not contain P_n} and the fact that $\vP\nsubseteq\vV$. Clearly, $\delta_{s,s'}$ implies $\delta_{s+1,s'}$ and $\delta_{s,s'+1}$. Then the arguments quite analogous to ones from the first paragraph of the proof of this proposition allow us to assume that $\vV$ satisfies $\delta_{n,n}$.

Since $\wu\approx\wv$ is efficient, either $\sum_{i=0}^{4n-1}e_i\ge n$ or $\sum_{i=0}^{4n-1}f_i\ge n$ and either $\sum_{i=m-4n+1}^me_i\ge n$ or $\sum_{i=m-4n+1}^mf_i\ge n$. By symmetry, we may assume that $\sum_{i=0}^{4n-1}e_i\ge n$. Further considerations are divided into two cases: $\sum_{i=m-4n+1}^me_i\ge n$ or $\sum_{i=m-4n+1}^mf_i\ge n$.

\smallskip

\textit{Case} 2.1: $\sum_{i=m-4n+1}^me_i\ge n$. Put
$$
Y_1=\{s\mid 4n\le s\le m-4n,\ n\le\sum_{i=0}^sf_i\} \text{ and } Y_2=\{s\mid 4n\le s\le m-4n,\ n\le\sum_{i=s}^mf_i\}.
$$
Let 
$$ 
p = \begin{cases} \min Y_1 & \text{if }Y_1\ne\emptyset, \\ m-4n & \text{otherwise}; \end{cases} 
\ \text{ and }\ 
q = \begin{cases} \max Y_2 & \text{if }Y_2\ne\emptyset, \\ 4n & \text{otherwise}. \end{cases}
$$
Clearly, $p\le q$ because $f>2n$. There exist $\ell,r\le n$ and a maximal subsequence $k_1,k_2,\dots, k_\ell$ of $4n,4n+1,\dots,p-1$ such that $f_{k_1},f_{k_2},\dots, f_{k_{\ell}}>0$, and a maximal subsequence $h_1,h_2,\dots, h_r$ of $q+1,q+2,\dots,m-4n$ such that $f_{h_1},f_{h_2},\dots, f_{h_r}>0$. For convenience, we put $k_{\ell+1}=p$ and $k_{r+1}=m-4n+1$. Then
$$
\begin{aligned}
\prod_{i=4n}^{p-1}(t_ix^{e_i}y^{f_i})={}&\prod_{i=4n}^{k_1-1}(t_ix^{e_i})\prod_{s=1}^{\ell}\bigl((t_{k_s}x^{e_{k_s}}y^{f_{k_s}})\prod_{i=k_s+1}^{k_{s+1}-1}(t_ix^{e_i})\bigr)\\
\prod_{i=q+1}^{m-4n}(t_ix^{e_i}y^{f_i})={}&\prod_{i=q+1}^{h_1-1}(t_ix^{e_i})\prod_{s=1}^r\bigl((t_{h_s}x^{e_{h_s}}y^{f_{h_s}})\prod_{i=h_s+1}^{h_{s+1}-1}(t_ix^{e_i})\bigr).
\end{aligned}
$$
For any $i=p,p+1,\dots,q$, put
$$ 
e_i' = \begin{cases} n & \text{if }e_i>0, \\ 0 & \text{otherwise}; \end{cases} 
\ \text{ and }\ 
f_i' = \begin{cases} n & \text{if }f_i>0, \\ 0 & \text{otherwise}. \end{cases}
$$
Then $\vV$ satisfies the identities $\wu\stackrel{\kappa_{n,j}}\approx\wu_1$ and $\wv\stackrel{\kappa_{n,j}}\approx\wv_1$ where $\wu_1=x^{e_0}y^{f_0}\ww_1\ww_2\ww_3$, $\wv_1=y^{f_0}x^{e_0}\ww_1\ww_2\ww_3$, $\ww_2=\prod_{i=p}^q(t_ix^{e_i'}y^{f_i'})$ and
$$
\begin{aligned}
\ww_1={}&\prod_{i=1}^{4n-1}(t_ix^{e_i}y^{f_i})\prod_{i=4n}^{k_1-1}(t_ix^n)\prod_{s=1}^{\ell}\bigl((t_{k_s}x^{e_{k_s}}y^{f_{k_s}})\prod_{i=k_s+1}^{k_{s+1}-1}(t_ix^n)\bigr),\\
\ww_3={}&\prod_{i=q+1}^{h_1-1}(t_ix^n)\prod_{s=1}^r\bigl((t_{h_s}x^{e_{h_s}}y^{f_{h_s}})\prod_{i=h_s+1}^{h_{s+1}-1}(t_ix^n)\bigr)\prod_{i=m-4n+1}^m(t_ix^{e_i}y^{f_i}).
\end{aligned}
$$
If $e_s'=f_s'$ for some $p\le s\le q$, then $e_s'=f_s'=n$ because $\wu\approx\wv$ is efficient. Then $\vV\{\wu\approx\wv\}=\vV\{\eqref{x^{e_0}y^{f_0}tx^ny^n=y^{f_0}x^{e_0}tx^ny^n}\}$ because 
$\wu\stackrel{\kappa_{n,j}}\approx\wu_1\stackrel{\eqref{x^{e_0}y^{f_0}tx^ny^n=y^{f_0}x^{e_0}tx^ny^n}}\approx \wv_1\stackrel{\kappa_{n,j}}\approx\wv$, and we are done. So, we may assume that $(e_i',f_i')\in\{(n,0),(0,n)\}$ for any $i=p,p+1,\dots,q$. By symmetry, we may assume that there exist $b\ge0$ and a subsequence $g_1=p,g_2,\dots, g_{b+1}=q+1$ of $p,p+1,\dots,q+1$ such that $\ww_2=\prod_{i=1}^b\bigl(\prod_{s=g_i}^{g_{i+1}-1}(t_s\we_i^n)\bigr)$.
If $b>n$, then $\vV\{\wu\approx\wv\}=\vV$ because 
$\wu\stackrel{\kappa_{n,j}}\approx\wu_1\stackrel{\delta_{n,n}}\approx \wv_1\stackrel{\kappa_{n,j}}\approx\wv$, and we are done. Thus, we may assume that $b\le n$. 

Now Observation~\ref{O: equivalent identities} applies with the conclusion that $\wu_1\approx\wv_1$ is equivalent to $\wu'\approx\wv'$ where $\wu'=x^{e_0}y^{f_0}\ww_1'\ww_2'\ww_3'$, $\wv'=y^{f_0}x^{e_0}\ww_1'\ww_2'\ww_3'$, $\ww_2'=\prod_{i=1}^b(t_{g_i}\we_i^n)$ and
$$
\begin{aligned}
\ww_1'={}&\Bigl(\prod_{i=1}^{4n-1}(t_ix^{e_i}y^{f_i})\Bigr)\bigl(t_{2n}x^n\bigr)\Bigl(\prod_{s=1}^{\ell}(t_{k_s}x^{e_{k_s}}y^{f_{k_s}}t_{k_s+1}x^n)\Bigr),\\
\ww_3'={}&\bigl(t_{q+1}x^n\bigr)\Bigl(\prod_{s=1}^r(t_{h_s}x^{e_{h_s}}y^{f_{h_s}}t_{h_s+1}x^n)\Bigr)\Bigl(\prod_{i=m-4n+1}^m(t_ix^{e_i}y^{f_i})\Bigr).
\end{aligned}
$$
Then $\vV\{\wu\approx\wv\}=\vV\{\wu_1\approx\wv_1\}=\vV\{\wu'\approx\wv'\}$.
Since $\wu'$, $\wv'$ are $(n+1)$-free and $\ell,r,b\le n$, we have that
$$
\begin{aligned}
&\ell(\ww_1')\le (4n-1)(2n+1)+(n+1)+n(3n+2)<20n^2,\\
&\ell(\ww_2')\le n(n+1)\le2n^2,\\
&\ell(\ww_3')\le (n+1)+n(3n+2)+4n(2n+1)<20n^2.
\end{aligned}
$$
It follows that $\ell(\wu'),\ell(\wv')<50n^2$, and we are done.

\smallskip

\textit{Case} 2.2: $\sum_{i=m-4n+1}^mf_i\ge n$. Put
$$
Y_1=\{s\mid 4n\le s\le m-4n,\ n\le\sum_{i=0}^sf_i\} \text{ and } Y_2=\{s\mid 4n\le s\le m-4n,\ n\le\sum_{i=s}^me_i\}.
$$
Let 
$$ 
p = \begin{cases} \min Y_1 & \text{if }Y_1\ne\emptyset, \\ m-4n & \text{otherwise}; \end{cases} 
\ \text{ and }\ 
q = \begin{cases} \max Y_2 & \text{if }Y_2\ne\emptyset, \\ 4n & \text{otherwise}. \end{cases}
$$

\textit{Case} 2.2.1: $q<p$. Then there exist $\ell,r\le n$ and a maximal subsequence $k_1,k_2,\dots, k_\ell$ of $4n,4n+1,\dots,q$ such that $f_{k_1},f_{k_2},\dots, f_{k_{\ell}}>0$, and a maximal subsequence $h_1,h_2,\dots, h_r$ of $p,p+1,\dots,m-4n$ such that $e_{h_1},e_{h_2},\dots, e_{h_r}>0$. For convenience, we put $k_{\ell+1}=q+1$ and $k_{r+1}=m-4n+1$. Then
$$
\begin{aligned}
\prod_{i=4n}^{q}(t_ix^{e_i}y^{f_i})={}&\prod_{i=4n}^{k_1-1}(t_ix^{e_i})\prod_{s=1}^{\ell}\bigl((t_{k_s}x^{e_{k_s}}y^{f_{k_s}})\prod_{i=k_s+1}^{k_{s+1}-1}(t_ix^{e_i})\bigr)\\
\prod_{i=p}^{m-4n}(t_ix^{e_i}y^{f_i})={}&\prod_{i=p}^{h_1-1}(t_iy^{f_i})\prod_{s=1}^r\bigl((t_{h_s}x^{e_{h_s}}y^{f_{h_s}})\prod_{i=h_s+1}^{h_{s+1}-1}(t_iy^{f_i})\bigr).
\end{aligned}
$$
Then $\vV$ satisfies the identities $\wu\stackrel{\kappa_{n,j}}\approx\wu_1$ and $\wv\stackrel{\kappa_{n,j}}\approx\wv_1$ where $\wu_1=x^{e_0}y^{f_0}\ww_1\ww_2\ww_3$, $\wv_1=y^{f_0}x^{e_0}\ww_1\ww_2\ww_3$, $\mathbf w_2=\prod_{i=q+1}^{p-1}(t_ix^{e_i}y^{f_i})$ and
$$
\begin{aligned}
\ww_1={}&\prod_{i=1}^{4n-1}(t_ix^{e_i}y^{f_i})\prod_{i=4n}^{k_1-1}(t_ix^n)\prod_{s=1}^{\ell}\bigl((t_{k_s}x^{e_{k_s}}y^{f_{k_s}})\prod_{i=k_s+1}^{k_{s+1}-1}(t_ix^n)\bigr),\\
\ww_3={}&\prod_{i=p+1}^{h_1-1}(t_iy^n)\prod_{s=1}^r\bigl((t_{h_s}x^{e_{h_s}}y^{f_{h_s}})\prod_{i=h_s+1}^{h_{s+1}-1}(t_iy^n)\bigr)\prod_{i=m-4n+1}^m(t_ix^{e_i}y^{f_i}).
\end{aligned}
$$

Now Observation~\ref{O: equivalent identities} applies with the conclusion that $\wu_1\approx\wv_1$ is equivalent to $\wu'\approx\wv'$ where $\wu'=x^{e_0}y^{f_0}\ww_1'\ww_2\ww_3'$, $\wv'=y^{f_0}x^{e_0}\ww_1'\ww_2\ww_3'$ and
$$
\begin{aligned}
\ww_1'={}&\Bigl(\prod_{i=1}^{4n-1}(t_ix^{e_i}y^{f_i})\Bigr)\bigl(t_{4n}x^n\bigr)\Bigl(\prod_{s=1}^{\ell}(t_{k_s}x^{e_{k_s}}y^{f_{k_s}}t_{k_s+1}x^n)\Bigr),\\
\ww_3'={}&\bigl(t_{p+1}y^n\bigr)\Bigl(\prod_{s=1}^r(t_{h_s}x^{e_{h_s}}y^{f_{h_s}}t_{h_s+1}y^n)\Bigr)\Bigl(\prod_{i=m-4n+1}^m(t_ix^{e_i}y^{f_i})\Bigr).
\end{aligned}
$$
Then $\vV\{\wu\approx\wv\}=\vV\{\wu_1\approx\wv_1\}=\vV\{\wu'\approx\wv'\}$. Since $\wu'$, $\wv'$ are $(n+1)$-free and $\ell,r\le n$, we have that
$$
\begin{aligned}
&\ell(\ww_1')\le (4n-1)(2n+1)+(n+1)+n(3n+2)<20n^2,\\
&\ell(\ww_3')\le (n+1)+n(3n+2)+4n(2n+1)<20n^2.
\end{aligned}
$$
The definitions of $p$ and $q$ imply that $\ell(\ww_2)\le4n$. It follows that $\ell(\wu'),\ell(\wv')<50n^2$, and we are done.

\smallskip

\textit{Case} 2.2.2: $p\le q$. This case is considered similarly to Case~2.1. 

\medskip

So, we have proved that each subvariety of $\vV$ can be given by identities from some fixed finite system of identities both hand-sides of which are of length~$\le50n^2$. It follows that the variety $\vV$ is small and so Cross by aforementioned Lemma~7 of~\cite{Lee-13}.
\end{proof}

\section{Main result}
\label{sec: main result}

To formulate the main result of the article we put
$$
\begin{aligned}
&\vK=\var\{\eqref{xyx=xyxx},\,\eqref{xxy=xxyx},\,\eqref{xxyy=yyxx}\},\\
&\vJ=\var
\left\{
\left.
\begin{array}{l}
\eqref{xyx=xyxx},\,\eqref{xxyy=yyxx},\,\eqref{xyzxy=yxzxy},\,\eqref{xyxztx=xyxzxtx},\\
xz_{1\pi} z_{2\pi}\cdots z_{n\pi}x\biggl(\,\prod\limits_{i=1}^nt_iz_i\biggr)\approx
x^2z_{1\pi} z_{2\pi}\cdots z_{n\pi}\biggl(\,\prod\limits_{i=1}^nt_iz_i\biggr)
\end{array}
\right|
\begin{array}{l}
n\ge1,\\
\pi\in S_n
\end{array}
\right\}
\end{aligned}
$$
where $S_n$ denote the full symmetric group on the set $\{1,2,\dots,n\}$. Let $\vM$ and $\vN$ denote the monoid varieties generated by the monoids $S(xzxyty)$ and $S(xyzxty,xtyzxy)$, respectively.{\sloppy

}
The main result of the article is the following

\begin{theorem}
\label{T: main result}
A subvariety of $\Acom$ is Cross if and only if it excludes the varieties $\vJ$, $\overleftarrow{\vJ}$ $\vK$, $\overleftarrow{\vK}$, $\vL$, $\vM$, $\vN$, $\vP$ and $\overleftarrow{\vP}$. Consequently, the class $\Acom$ contains precisely nine almost Cross subvarieties.
\end{theorem}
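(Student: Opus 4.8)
The plan is to reduce the theorem to a single classification statement via the principle recalled in the introduction: a variety is Cross if and only if it contains no almost Cross subvariety. Thus it suffices to prove that the almost Cross subvarieties of $\Acom$ are \emph{precisely} the nine listed varieties. Granting this, the ``only if'' direction is immediate, since a Cross variety contains no almost Cross subvariety and hence excludes all nine. For the ``$\supseteq$'' half of the classification I would collect the known facts that each of the nine is almost Cross: $\vL$, $\vM$, $\vN$ from \cite{Jackson-05,Lee-13}, the pair $\vK$, $\overleftarrow{\vK}$ from \cite{Gusev-Vernikov-18}, the pair $\vJ$, $\overleftarrow{\vJ}$ from \cite{Gusev-20}, and $\vP$, $\overleftarrow{\vP}$ from Proposition~\ref{P: P is almost Cross} together with duality.

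The real work is the ``$\subseteq$'' half: every almost Cross subvariety $\vV$ of $\Acom$ coincides with one of the nine. First I would discard the trivial cases. By Observation~\ref{O: cr and comm are Cross} a commutative or completely regular subvariety of $\Acom$ is Cross, so $\vV$ may be taken non-commutative and non-completely-regular, and by Observation~\ref{O: var in Acom} it satisfies $x^n\approx x^{n+1}$ for some $n\ge2$. The first essential dichotomy is on the identity $xyx^2\approx x^2yx$. If $\vV$ satisfies it, then the theorem of \cite{Lee-14} says that a subvariety of $\Acom$ obeying $xyx^2\approx x^2yx$ is Cross if and only if it excludes $\vL$, $\vM$, $\vN$; since $\vV$ is non-Cross it must contain one of these three, and as $\vV$ is minimal non-Cross while $\vL$, $\vM$, $\vN$ are themselves non-Cross, $\vV$ equals one of them.

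So I may assume $\vV$ violates $xyx^2\approx x^2yx$. I would then split according to the variety $\vO$. If $\vV\subseteq\vO$, then Proposition~\ref{P: aperiodic almost Cross in O} forces $\vV\in\{\vL,\vP\}$, and since $\vL$ satisfies $xyx^2\approx x^2yx$ whereas $\vV$ does not, we get $\vV=\vP$; dually $\vV\subseteq\overleftarrow{\vO}$ yields $\vV=\overleftarrow{\vP}$. Note that $\vM$ and $\vN$ cannot resurface in this non-central case, for they satisfy $xyx^2\approx x^2yx$ and containing either would, by minimality, force $\vV$ to equal a central variety. Hence the sole remaining configuration is $\vV\not\subseteq\vO$ and $\vV\not\subseteq\overleftarrow{\vO}$, and the goal becomes to show that this forces $\vV$ to contain one of $\vK$, $\overleftarrow{\vK}$, $\vJ$, $\overleftarrow{\vJ}$, after which minimality again gives equality.

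This last configuration is where I expect the main obstacle to lie. The intended mechanism is to analyze, through the dichotomy Lemmas~\ref{L: does not contain F} and~\ref{L: does not contain Q} and their duals, which of $\vF$, $\overleftarrow{\vF}$, $\vQ$, $\overleftarrow{\vQ}$ are contained in $\vV$, thereby locating $\vV$ within the range governed by the classification of subvarieties of $\vR$ in Lemma~\ref{L: R subvarieties}. One must then translate the simultaneous failure of the defining identities~\eqref{xtyzxy=xtyzyx} and~\eqref{xtxyzy=xtyxzy} of $\vO$ and of their reverses, together with the admissible combinations among \eqref{xyzxy=yxzxy}, \eqref{yxxtxy=xyxtxy}, \eqref{xxytxy=xyxtxy}, \eqref{xyxztx=xyxzxtx} and the families $\alpha_n,\beta_n,\gamma_n,\gamma_n'$ permitted by Lemma~\ref{L: R subvarieties}, into the statement that some defining word of $\vK$, $\overleftarrow{\vK}$, $\vJ$ or $\overleftarrow{\vJ}$ is an isoterm for $\vV$; the isoterm criterion Lemma~\ref{L: isoterm} then supplies the required containment. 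The difficulty is the purely word-combinatorial verification that no admissible configuration of identities escapes these four varieties, carried out while keeping track of both directions of the duality $\vV\mapsto\overleftarrow{\vV}$.
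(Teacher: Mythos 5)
Your outer logic is sound: reducing Theorem~\ref{T: main result} to the classification of the almost Cross subvarieties of $\Acom$ is legitimate (given the Zorn's-lemma principle from the introduction), and your easy cases are handled correctly by a route genuinely different from the paper's. You split on whether $\vV$ satisfies $xyx^2\approx x^2yx$ and invoke Lee's theorem from~\cite{Lee-14}, then on whether $\vV\subseteq\vO$ or $\vV\subseteq\overleftarrow{\vO}$ and invoke Proposition~\ref{P: aperiodic almost Cross in O}; the paper instead splits on whether $S(xyx)\in\vV$, using Lemma~\ref{L: isoterm} and \cite[Fact~3.1]{Sapir-15} to land in $\vO$ or $\overleftarrow{\vO}$ when $\vM,\vN$ are excluded. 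However, your proposal stops exactly where the real content of the theorem begins: the configuration where $\vV$ violates $xyx^2\approx x^2yx$ and is contained in neither $\vO$ nor $\overleftarrow{\vO}$ is left as a stated goal with an ``intended mechanism,'' and that configuration is precisely what the bulk of the paper's Section~5 argument is devoted to. Announcing that ``no admissible configuration of identities escapes these four varieties'' must be verified is naming the theorem, not proving it; this is a genuine gap.

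Moreover, the mechanism you sketch would not work as stated. Lemma~\ref{L: isoterm} only yields containments of the form $S(W)\in\vV$, i.e., $\var S(W)\subseteq\vV$; it cannot by itself deliver $\vK\subseteq\vV$ or $\vJ\subseteq\vV$, because $\vK$ and $\vJ$ are defined equationally and are nowhere presented (in this paper or the works it cites) as varieties generated by Rees quotients. The paper's actual argument in this regime runs in the contrapositive direction and through identities, not isoterms: from $S(xyx)\notin\vV$ one gets $xyx\approx x^pyx^q$, whence either $xyx\approx yx^2$ (Cross by the dual of \cite[Corollary~3.6]{Lee-14}) or $x^2\approx x^3$ and \eqref{xyx=xyxx}; then $\overleftarrow{\vE}\nsubseteq\vV$ gives $\vV\subseteq\vK$, and $\vF\nsubseteq\vV$ gives $\vV\subseteq\vQ$ via Lemma~\ref{L: does not contain F}, both Cross cases; otherwise $\vF\vee\overleftarrow{\vE}\subseteq\vV$, the exclusion of $\vJ$ forces (via~\cite{Gusev-JAA}) one of \eqref{xzxyxty=xzyxty} or $\beta_1$, hence $\vV\subseteq\vR$, and the exclusion of $\vP$ together with Proposition~\ref{P: P is almost Cross} and Lemma~\ref{L: does not contain P_n} forces $\delta_{k,2}$, hence $\alpha_k$, for some $k$. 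Only then does Lemma~\ref{L: R subvarieties} bound the possible defining identities of subvarieties of $\vV$ by a fixed finite system, giving smallness and finite basis, with finite generation supplied by Lemmas~\ref{L: LF+small=FG} and~\ref{L: xyx=xyxx is locally finite}. Your proposal contains none of this chain --- in particular no route to smallness or to finite generation --- so the case that constitutes the heart of the theorem remains unproven.
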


\begin{proof}
The varieties listed in Theorem~\ref{T: main result} are almost Cross:
\begin{itemize}
\item $\vJ$ and $\overleftarrow{\vJ}$ by~\cite[Corollary~1.2 and the dual to it]{Gusev-20};
\item $\vK$ and $\overleftarrow{\vK}$ by~\cite[Proposition~6.1 and the dual to it]{Gusev-Vernikov-18};
\item $\vL$ by~\cite[Theorem~2]{Lee-13}; 
\item $\vM$ and $\vN$ by~\cite[Proposition~5.1]{Jackson-05}; 
\item $\vP$ and $\overleftarrow{\vP}$ by Proposition~\ref{P: P is almost Cross} and the dual to it.
\end{itemize}
Therefore, every Cross subvariety of $\Acom$ does not contain these varieties.
So, to complete the proof it remains to verify that if $\vV$ is a subvariety of $\Acom$ and $\vV$ excludes the varieties listed in Theorem~\ref{T: main result}, then $\vV$ is Cross.

Suppose that $S(xyx)\in\vV$. Since $\vM\nsubseteq\vV$ and $\vN\nsubseteq\vV$, either $\vV\subseteq\vO$ or $\vV\subseteq\overleftarrow{\vO}$ by Lemma~\ref{L: isoterm} and~\cite[Fact~3.1]{Sapir-15}. Then Proposition~\ref{P: aperiodic almost Cross in O} and the dual to it imply that $\vV$ is a Cross variety because $\vV$ does not contain the varieties $\vL$, $\vP$ and $\overleftarrow{\vP}$.

Suppose now that $S(xyx)\notin\vV$. In view of Observation~\ref{O: cr and comm are Cross}, we assume that $\vV$ is non-commutative and non-completely regular. According to Lemma~\ref{L: isoterm}, $\vV$ satisfies a non-trivial identity of the form $xyx\approx \ww$. Then $\ww =x^pyx^q$ for some $p$ and $q$ such that $p\ge 2$ or $q\ge 2$ by Lemma~\ref{L: equivalent decompositions}. By symmetry, we may assume that $q\ge 2$. 

Suppose at first that $p=0$. Then $\vV$ satisfies the identity $x^2\approx x^q$ and, therefore, the identity $xyx\approx yx^2$. It follows from~\cite[the dual to Corollary~3.6]{Lee-14} that every periodic variety that satisfies the latter identity is Cross. Thus, $\vV$ is a Cross variety. Suppose now that $p\ge 1$.  Then $\vV$ satisfies the identity $x^2\approx x^{p+q}$ and, therefore, the identity $x^2\approx x^3$ because $\vV$ is aperiodic. Then the identities
$$
xyx\approx x^pyx^q\approx x^pyx^{q+1}\approx xyx^2
$$
hold in $\vV$. Thus, \eqref{xyx=xyxx} is satisfied by $\vV$. In view of Observation~\ref{O: var in Acom}, $\vV$ satisfies the identity~\eqref{x^ny^n=y^nx^n} for some $n\ge1$. This identity together with~\eqref{xyx=xyxx} implies~\eqref{xxyy=yyxx}. 

Consider an arbitrary subvariety $\vX$ of $\vV$. If $\overleftarrow{\vE}\nsubseteq\vX$, then $\vX\subseteq\vK$ by~\cite[Lemma~2.3(i)]{Gusev-21}. Since $\vV\ne\vK$ and $\vK$ is an almost Cross variety, we obtain that $\vX$ is a Cross subvariety of $\vK$. If $\vF\nsubseteq\vX$, then $\vX\subseteq\vQ$ by Lemma~\ref{L: does not contain F}. In view of~\cite[Proposition~5.4]{Lee-14}, $\vX$ is a Cross variety again.

In particular, it remains to consider the case when $\vF\vee\overleftarrow{\vE}\subseteq\vV$. Since $\vV\ne\vJ$, \cite[Lemma~4.1]{Gusev-21} implies that $\vV$ satisfies the identity \eqref{xzxyxty=xzyxty}. Hence $\vV\subseteq\vR$. Further, suppose that $\vQ\nsubseteq\vV$. Then $\vV$ satisfies the identity
\begin{equation}
\label{xxyzx=xxyxzx}
x^2yzx\approx x^2yxzx
\end{equation} 
by Lemma~\ref{L: does not contain Q}. Clearly, this identity together with~\eqref{xyx=xyxx} imply the identity~\eqref{xyxztx=xyxzxtx}. It is easy the see that every identity of the form $\mathbf pt_1\mathbf e_1\cdots t_k\mathbf e_k\approx \mathbf qt_1\mathbf e_1\cdots t_k\mathbf e_k$ is equivalent modulo~$\{\eqref{xyx=xyxx},\,\eqref{xxyzx=xxyxzx}\}$ to the identity
\[
\mathbf p\cdot t_1\mathbf e_1t_2\mathbf e_2t_3\mathbf e_3t_4\mathbf e_4\cdot t_5\cdots t_{k-1}\mathbf e_{k-1}t_k\mathbf e_k\approx \mathbf q\cdot t_1\mathbf e_1t_2\mathbf e_2t_3\mathbf e_3t_4\mathbf e_4\cdot t_5\cdots t_{k-1}\mathbf e_{k-1}t_k\mathbf e_k.
\] 
Evidently, the latter identity is equivalent modulo~\eqref{xyx=xyxx} to
\[
\mathbf p\cdot t_1\mathbf e_1t_2\mathbf e_2t_3\mathbf e_3t_4\mathbf e_4\cdot t_{k-1}\mathbf e_{k-1}t_k\mathbf e_k\approx \mathbf q\cdot t_1\mathbf e_1t_2\mathbf e_2t_3\mathbf e_3t_4\mathbf e_4\cdot t_{k-1}\mathbf e_{k-1}t_k\mathbf e_k.
\] 
This fact and Lemma~\ref{L: R subvarieties} imply that each subvariety of $\vV$ containing $\vF\vee\overleftarrow{\vE}$ may be given by some identities from the identity system 
$$
\{\eqref{xxyy=yyxx},\,\eqref{xyzxy=yxzxy},\,\eqref{xzxyxty=xzyxty},\, \eqref{yxxtxy=xyxtxy},\,\eqref{xxytxy=xyxtxy},\,\eqref{xyxztx=xyxzxtx},\,\alpha_r,\,\beta_r,\,\gamma_r,\,\gamma_r'\mid r\le 6\}.
$$
The evident fact that this identity system is finite and the previous paragraph imply that the variety $\vV$ is small and finitely based. Now Lemmas~\ref{L: LF+small=FG} and~\ref{L: xyx=xyxx is locally finite} apply with the conclusion that $\vV$ is finitely generated. So, we have proved that $\vV$ is a Cross variety. Let now $\mathbf Q\subseteq\mathbf V$. Then, since $\vV\ne\vP$, Proposition~\ref{P: P is almost Cross} implies that $\vP_{k+1}\nsubseteq\vV$ for some $k\ge0$.  Lemma~\ref{L: does not contain P_n} applies with the conclusion that $\vV$ satisfies $\delta_{k,2}$. Since~\eqref{xyx=xyxx} holds in $\vV$, the identity $\delta_{k,2}$ is equivalent to the identity $\alpha_k$ in $\vV$. Evidently, for any $r>k$, the identities $\alpha_r$, $\beta_r$, $\gamma_r$ and $\gamma_r'$ follow from $\alpha_k$. This fact and Lemma~\ref{L: R subvarieties} imply that each subvariety of $\vV$ containing $\vF\vee\overleftarrow{\vE}$ may be given by some identities from the identity system 
$$
\{\eqref{xxyy=yyxx},\,\eqref{xyzxy=yxzxy},\,\eqref{xzxyxty=xzyxty},\, \eqref{yxxtxy=xyxtxy},\,\eqref{xxytxy=xyxtxy},\,\eqref{xyxztx=xyxzxtx},\,\alpha_r,\,\beta_r,\,\gamma_r,\,\gamma_r'\mid r\le k\}.
$$
Since this identity system is finite, the same arguments as above imply that $\vV$ is a Cross variety again.
\end{proof}

\small

\end{document}